\documentclass[a4paper, 12pt]{amsart}

\usepackage[utf8]{inputenc}
\usepackage{microtype}
\usepackage{amsmath,amsthm,amssymb}
\usepackage[numbers]{natbib}
\usepackage{hyperref}
\hypersetup{
    colorlinks=true,
    linkcolor=magenta,
    citecolor=blue,
    urlcolor=cyan,
}
\usepackage{graphicx}
\usepackage[margin=20mm]{geometry}

\newtheorem{theorem}{Theorem}[section]
\newtheorem{proposition}[theorem]{Proposition}
\newtheorem{lemma}[theorem]{Lemma}
\newtheorem{corollary}[theorem]{Corollary}
\newtheorem{definition}[theorem]{Definition}
\newtheorem{remark}{Remark}[section]

\newcommand{\E}{\mathbb{E}}
\renewcommand{\P}{\mathbb{P}}
\newcommand{\R}{\mathbb{R}}
\newcommand{\C}{\mathbb{C}}
\newcommand{\Z}{\mathbb{Z}}
\newcommand{\cA}{\mathcal{A}}
\newcommand{\cB}{\mathcal{B}}

\renewcommand{\a}{\mathfrak{a}}
\renewcommand{\b}{\mathfrak{b}}
\newcommand{\e}{\mathfrak{e}}
\renewcommand{\l}{\mathfrak{l}}
\newcommand{\p}{\mathfrak{p}}
\newcommand{\s}{\mathfrak{s}}
\renewcommand{\S}{\mathfrak{S}}
\newcommand{\N}{\mathfrak{N}}
\newcommand{\M}{\mathfrak{M}}
\newcommand{\bone}{\mathbf{1}}
\newcommand{\bzero}{\mathbf{0}}
\newcommand{\rank}{\mathrm{rank}}
\newcommand{\tr}{\mathrm{Tr}}
\newcommand{\bX}{\mathbf{X}}

\newcommand{\bI}{\mathbf{I}}
\newcommand{\bB}{\mathbf{B}}
\newcommand{\bC}{\mathbf{C}}
\newcommand{\bM}{\mathbf{M}}
\newcommand{\bSigma}{\mathbf{\Sigma}}
\newcommand{\MN}{\mathrm{MN}}
\newcommand{\unit}{\bone_{\cA}}
\newcommand{\zero}{\bzero_{\cA}}
\newcommand{\state}{\varphi}
\newcommand{\moeb}{\mathrm{Moeb}}
\newcommand{\leb}{\mathrm{Leb}}
\newcommand{\iid}{\mathrm{i.i.d.}}
\newcommand{\fbern}{Bernoulli}
\newcommand{\fpois}{free Poisson}

\setlength{\parskip}{0.5em}

\title[On classical and free Poisson thinning]{Some characterization results on classical and free Poisson thinning}
\author[S. S. Mukherjee]{Soumendu Sundar Mukherjee}
\address{
    Interdisciplinary Statistical Research Unit\\
    Indian Statistical Institute\\
    203 B. T. Road, Kolkata 700108
}
\address{
    Department of Mathematics\\
    National University of Singapore\\
    10 Lower Kent Ridge Road, Singapore 119076
}
\email{soumendu041@gmail.com}
\thanks{The author is supported by an INSPIRE Faculty Fellowship from the Department of Science and Technology, Government of India.}
\keywords{Poisson thinning, free probability, Cochran's theorem, Craig's theorem}
\subjclass[2020]{46L54, 60E05, 62E10}

\begin{document}
\maketitle
\begin{abstract}
    Poisson thinning is an elementary result in probability, which is of great importance in the theory of Poisson point processes. In this article, we record a couple of characterization results on Poisson thinning. We also consider several free probability analogues of Poisson thinning, which we collectively dub as \emph{\fpois{} thinning}, and prove characterization results for them, similar to the classical case. One of these \fpois{} thinning procedures arises naturally as a high-dimensional asymptotic analogue of Cochran's theorem from multivariate statistics on the ``Wishart-ness'' of quadratic functions of Gaussian random matrices. We note the implications of our characterization results in the context of Cochran's theorem. We also prove a free probability analogue of Craig's theorem, another well-known result in multivariate statistics on the independence of quadratic functions of Gaussian random matrices.
\end{abstract}

\section{Introduction}\label{sec:intro}
Let $X_1, X_2, \ldots$ be a sequence of i.i.d. Bernoulli$(p)$ random variables, $p \in (0, 1)$. Let $S_n = \sum_{i = 1}^n X_i$ denote the number of successes in $n$ trials. Then, obviously, $S_n$ and $n - S_n$, the number of failures, are not independent. However, if we consider an independent Poisson$(\lambda)$ number $N$ of trials, then $S_N$ and $N - S_N$ become independent. In fact, they are Poisson distributed with parameters $\lambda p$ and $\lambda(1 - p)$, respectively. This result from elementary probability is known as \textit{Poisson thinning} and is of great importance in the theory of Poisson point processes (see, e.g., the book \cite{last2017lectures}).

In Theorem~\ref{thm:conv_classical} below, we note the following (loosely stated) converse of this fact: Let $N$ be a non-negative integer-valued random variable and suppose that $X_1, X_2, \ldots$ are i.i.d. and independent of $N$. For the independence of $S_N$ and $N - S_N$ to hold, (a) $N$ must be Poisson when it is known that $X_1$ is Bernoulli, (b) $X_1$ must be Bernoulli when it is known that $N$ is Poisson. Also, as noted in Proposition~\ref{prop:conv_classical} below, for $S_N$ to be Poisson, (a) $N$ must be Poisson when it is known that $X_1$ is Bernoulli, (b) $X_1$ must be Bernoulli when it is known that $N$ is Poisson.

We then ask if something similar holds in Voiculescu's free probability theory. The answer turns out to be yes. We first consider a free probability analogue of Poisson thinning for which characterization results analogous to the classical case hold, although with a restriction that the rate of the relevant \fpois{} variable must be $1$ (see Proposition~\ref{prop:thinning_free} and Section~\ref{sec:conv_free}). This \fpois{} thinning procedure turns up naturally as a high-dimensional asymptotic version of Cochran's theorem from multivariate statistics on the ``Wishart-ness'' of quadratic functions of Gaussian random matrices. This connection is elaborated in Section~\ref{sec:cochran}. The characterization results we prove on \fpois{} thinning yield characterization results on the asymptotic freeness of such quadratic functions. See Proposition~\ref{prop:asymptotic_cochran_backward_gen}. In course of this, we prove a free probability analogue of Craig's theorem from multivariate statistics on the independence of quadratic functions of Gaussian random matrices.

One can thin a Poisson random variable into more than two independent Poisson random variables using a categorical random variable instead of a Bernoulli. In Section~\ref{sec:more_than_two}, we extend our characterization results to this setting and also discuss the corresponding free probability analogue.

Finally, in Section~\ref{sec:other_versions}, we consider another variant of free Poisson thinning which does not have the restriction that the rate of the relevant \fpois{} variable must be $1$, and prove a characterization result for this variant.
We also mention another possible variant of \fpois{} thinning, suggested by one of the anonymous referees, which too does not have the restriction mentioned above. However, these versions do not seem have interesting random matrix connections as the first one.

We begin by recalling in Section~\ref{sec:prelim} some basic concepts from free probability.

\section{Basic concepts from free probability}\label{sec:prelim}
A \textit{non-commutative probability space} is a pair $(\cA, \state)$, where $\cA$ is an algebra over $\C$ with an identity $\unit$ and $\state : \cA \rightarrow \C$ is a linear functional with $\state(\unit) = 1$. $\state$ is called the \textit{state} and is the analogue of the expectation operator in classical probability. Elements of $\cA$ are the non-commutative analogues of classical random variables. Given such a variable $\a \in \cA$, the numbers $\state(\a^k), k \ge 1$, are called the \textit{moments} of $\a$. The state $\state$ is called \textit{tracial} if $\state(\a\b) = \state(\b\a)$ for all $\a, \b \in \cA$.

Often one imposes further structure on $\cA$. If $\cA$ is a $*$-algebra, i.e. it is equipped with an antilinear operation $* : \cA \rightarrow \cA$, and $\state$ is \textit{positive}, i.e. $\state(\a^* \a) \geq 0$ for all $\a \in \cA$, then $(\cA, \state)$ is called a \textit{$*$-probability space}. In this case, $\state$ is Hermitian, i.e. $\state(\a^*) = \overline{\state(\a)}$ for all $\a \in \cA$. $\state$ is called \textit{faithful} if $\state(\a^*\a) = 0$ only if $\a = \zero$.

If $\cA$ is a $C^*$-algebra, i.e. a $*$-algebra which is also a Banach algebra with respect to some norm $\|\cdot\|$ such that the $C^*$-identity $\|\a^*\a\| = \|\a\|^2$ holds for all $\a \in \cA$, then $(\cA, \state)$ is called a \textit{$C^*$-probability space}.

A natural example comes from the unital $*$-algebra $\mathcal{M}_n(\C)$ of $n\times n$ matrices over $\C$ (the identity matrix serves as the unit while the $*$-operation is given by taking conjugate transpose). This becomes a $*$-probability space when endowed with the state $\state(A) = \frac{1}{n}\tr(A)$ (in fact, a $C^*$-probability space under the operator norm).

To deal with random matrices, given a probability space $(\Omega, \cB, \mathbb{P})$, one may consider the unital $*$-algebra $\mathcal{M}_n(L^{\infty, -}(\Omega, \mathbb{P}))$  of random matrices whose entries are in $L^{\infty, -}(\mathbb{P}) = \cap_{1 \le p < \infty}L^p(\Omega, \mathbb{P})$, the space of random variables with all moments finite. Equipped with the state $\state(A) = \E\frac{1}{n}\tr(A)$, where $\E$ denotes expectation with respect to $\mathbb{P}$, this becomes a $*$-probability space.

We now define the central concept in free probability, namely free independence.

\begin{definition}\label{def:freeness} (Free independence)
	Let $(\cA, \state)$ be a non-commutative probability space and let $\{\cA_i\}_{i \in I}$ be a collection of unital subalgebras of $\cA$, indexed by a fixed set $I$. The subalgebras $\{\cA_i\}_{i\in I}$ are called freely independent, or just free, if 
	\begin{equation*}
	\state(\a_1 \cdots \a_k) = 0
	\end{equation*}
	for every $k \ge 1$, where
	\begin{itemize}
		\item[(a)] $\a_j \in \cA_{i_j}$ for some $i_j \in I$,  
		
		\item[(b)] $\state(\a_j)=0$ for every $1 \le j\le k$, and 
		
		\item[(c)] neighboring elements are from different subalgebras, i.e. $i_1 \neq i_2, i_2 \ne i_3, \ldots, i_{k - 1}\ne i_k$. 
	\end{itemize} 
	Elements $\{\a_i\}_{i \in I}$ from $\cA$ are called freely independent, or just free, if the unital subalgebras generated by each of them are free.
	
	If $(\cA, \state)$ is a $*$-probability space, and $\{\cA_i\}_{i \in I}$ are unital $*$-subalgebras and the above conditions hold, then one says that $\{\cA_i\}_{i \in I}$ are $*$-freely independent, or just $*$-free. Elements $\{\a_i\}_{i \in I}$ from $\cA$ are called $*$-freely independent, or just $*$-free, if the unital $*$-subalgebras generated by each of them are $*$-free.
\end{definition}

Analogous to the classical Kolmogorov construction, which shows the existence of independent random variables, one can show that freely independent variables $\a_1, \ldots, \a_n$ with given moments $\{\state(\a_i^k), 1\leq i \leq n, k \geq 1\}$, or, more generally, freely independent non-commutative probability spaces $(\cA_i, \state_i)$, $i \in I$ exist (see, e.g., Lecture~6 of \cite{nica2006lectures}).

Just as classical independence translates to the vanishing of mixed cumulants, freeness is equivalent to the vanishing of mixed free cumulants, which are free analogues of classical cumulants. We now talk about classical and free cumulants and their relations to classical and free independence.

\subsection{Free cumulants and free independence}\label{sec:freecum}
To define free cumulants, one needs $NC(n)$, the (finite) lattice of all non-crossing partitions of $\{1, \ldots , n\}$ with the reverse refinement partial order. The symbols $\bzero_n$ and $\bone_n$ respectively denote the smallest and the largest partitions $\{\{1\}, \{2\}, \ldots ,\{n\}\}$ and $\{1, 2, \ldots , n\}$. For any $\pi, \sigma\in NC(n)$, we shall denote by $\pi \vee \sigma$ (resp. $\pi \wedge \sigma$) the smallest (resp. largest) element of $NC(n)$ which is larger (resp. smaller) than both $\pi$ and $\sigma$. Being a finite lattice, it is equipped with a unique M\"{o}bius function $\moeb_n$ on $NC^{[2]}(n)=\{(\pi, \sigma) \mid \pi < \sigma, \pi, \sigma\in NC(n)\}$. We denote the natural extension of these functions to $\cup_{n=1}^\infty NC^{[2]}(n)$ by $\moeb$. 

Let $(\mathcal{A},\state)$ be a non-commutative probability space. One can extend $\state$ to a multilinear functional $\state_{n}$ on $\cA^n$:
\begin{equation} \label{eq:mmt4.1}
\state_{n} (\a_1, \a_2, \ldots, \a_n) := \state(\a_1 \a_2 \cdots \a_n).
\end{equation}
The multiplicative extension $\{\state_{\pi} \mid \pi \in NC(n), n \geq 1\}$ of $\{\state_n \mid n \ge 1\}$ is defined as follows:  
If $\pi = \{V_1, V_2, \ldots, V_r \} \in NC(n)$, then
\begin{equation} \label{eq:multi4.1}
\state_{\pi}[\a_1, \a_2, \ldots, \a_n] := \state(V_1)[\a_1, \a_2, \ldots, \a_n] \cdots \state(V_r)[\a_1, \a_2, \ldots, \a_n],
\end{equation}
where 
\[
\state(V)[\a_1, \a_2, \ldots, \a_n] := \state_s(\a_{i_1}, \a_{i_2}, \ldots, \a_{i_s}) = \state(\a_{i_1} \a_{i_2} \cdots \a_{i_s})
\]
for $V = \{i_1, \ldots, i_s\}$ with $i_1 < i_2 < \cdots < i_s$. In particular, 
\begin{equation} 
\state_{\bone_n}[\a_1, \a_2, \ldots, \a_n] = \state_{n}(\a_1, \a_2, \ldots, \a_n) = \state(\a_1 \a_2 \cdots \a_n).
\end{equation}

We may now define free cumulants by using the M\"{o}bius function $\moeb$. 
\begin{definition} (Free cumulants)
	Let $\a_1, \ldots, \a_n \in \cA$. Their \textit{joint free cumulant} \index{free cumulant} of order $n$ is the multilinear functional
	\begin{equation} \label{eq:freecum}
	\kappa_{n}(\a_1, \a_2, \ldots, \a_n) = \sum_{\pi \in NC(n)} \moeb(\pi, \bone_{n})\state_{\pi}[\a_1, \a_2, \ldots, \a_n]. 
	\end{equation}
\end{definition}
In particular, $\kappa_{n}(a) := \kappa_n(a, a, \ldots, a)$ is called the $n$-th free cumulant of $a$. 

As in \eqref{eq:multi4.1}, the functionals $\{\kappa_{n} \mid n \ge 1\}$ also have a multiplicative extension $\{\kappa_{\pi} \mid \pi \in NC(n), n \ge 1\}$. By M\"{o}bius inversion, one has  (see Proposition $11.4$ in \cite{nica2006lectures}), for all $\a_1, \ldots, \a_n$, 
\begin{align*}
\kappa_{\sigma}[\a_{1}, \a_{2}, \ldots, \a_{n}] &= \sum_{\substack{\pi \in NC(n) \\ \pi \le \sigma}} \moeb(\pi, \sigma) \state_{\pi}[ \a_{1}, \a_{2}, \ldots, \a_{n}] \\
\state_{\sigma}[\a_1, \a_2, \ldots, \a_n] &= \sum_{\substack{\pi \in NC(n) \\ \pi \le \sigma}} \kappa_{\pi}[\a_1, \a_2, \ldots, \a_n].
\end{align*}
In particular,
\[
\state(\a_1 \a_2 \cdots \a_n) = \sum_{\pi \in NC(n)} \kappa_{\pi}[\a_1, \a_2, \ldots, \a_n].
\]
Freeness is equivalent to the vanishing of mixed free cumulants.
\begin{proposition}\label{prop:freeness_equivalence}
	Suppose $(\cA, \state)$ is a non-commutative probability space. Then subalgebras $\{\cA_{i}\}_{i \in I}$ of $\cA$ are free if and only if, for all $\a_1, \a_2, \ldots, \a_n \in \cup_{i \in I} \cA_i$, $n \ge 2$, 
	\[
	\kappa_n(\a_1, \a_2, \ldots, \a_n) = 0
	\]
	whenever at least two of the $\a_i$'s are from different $\cA_i$'s.
\end{proposition}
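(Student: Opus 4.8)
The plan is to prove the two implications separately; the equivalence is classical (due to Speicher; see Theorem~11.16 of \cite{nica2006lectures}), so one may also simply cite it. For ``vanishing of mixed cumulants $\Rightarrow$ freeness'', take $\a_1, \ldots, \a_k$ with $\a_j \in \cA_{i_j}$, $\state(\a_j) = 0$, and $i_1 \ne i_2, \ldots, i_{k-1} \ne i_k$, and expand $\state(\a_1 \cdots \a_k) = \sum_{\pi \in NC(k)} \kappa_\pi[\a_1, \ldots, \a_k]$ through the moment--cumulant relation; it suffices to see that every summand $\kappa_\pi[\a_1, \ldots, \a_k] = \prod_{V \in \pi} \kappa_{|V|}(\a_j : j \in V)$ vanishes. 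If $\pi$ has a singleton block $\{j\}$, the factor $\kappa_1(\a_j) = \state(\a_j) = 0$. Otherwise every block has at least two elements, and by the elementary fact that such a non-crossing partition must contain a block $V$ holding two consecutive integers $p, p+1$, the factor $\kappa_{|V|}(\a_j : j \in V)$ is a cumulant with two arguments, $\a_p \in \cA_{i_p}$ and $\a_{p+1} \in \cA_{i_{p+1}}$, from different subalgebras (since $i_p \ne i_{p+1}$), hence $0$ by hypothesis. I would prove that combinatorial fact by a minimal-gap argument: pick a block together with two of its elements that are consecutive within the block and at minimal distance, and use non-crossingness to rule out a distance $\ge 2$.

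For ``freeness $\Rightarrow$ vanishing of mixed cumulants'' I would induct on $n$, proving that $\kappa_n(\a_1, \ldots, \a_n) = 0$ whenever $\a_j \in \cA_{i_j}$ and $i_1, \ldots, i_n$ are not all equal. Since $\kappa_n(\ldots, \unit, \ldots) = 0$ for $n \ge 2$ (a short separate induction from the moment--cumulant relations), multilinearity lets me assume that all $\a_j$ are centred. Rewriting the moment--cumulant relation as $\kappa_n(\a_1, \ldots, \a_n) = \state(\a_1 \cdots \a_n) - \sum_{\pi \ne \bone_n} \kappa_\pi[\a_1, \ldots, \a_n]$, I observe that in the sum every block has fewer than $n$ elements, so centredness and the inductive hypothesis leave only those $\pi$ whose blocks are all non-singletons lying inside a single subalgebra. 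If the word is \emph{alternating}, i.e. $i_1 \ne i_2, \ldots, i_{n-1} \ne i_n$ (which forces the indices to be non-constant, so this also settles the base case $n = 2$), the consecutive-pair fact above shows that no such $\pi$ survives, while $\state(\a_1 \cdots \a_n) = 0$ directly by the definition of freeness; hence $\kappa_n(\a_1, \ldots, \a_n) = 0$.

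The substantive case, and the main obstacle, is the \emph{non-alternating} one, where $i_\ell = i_{\ell+1}$ for some $\ell$; this is where the structure of $NC(n)$ is genuinely used. Here I would merge $\a_\ell$ and $\a_{\ell+1}$ into $\a_\ell \a_{\ell+1} \in \cA_{i_\ell}$ and invoke the formula for free cumulants with products as entries (a purely combinatorial identity on $NC(n)$, not involving freeness): it gives $\kappa_{n-1}(\a_1, \ldots, \a_{\ell-1}, \a_\ell \a_{\ell+1}, \a_{\ell+2}, \ldots, \a_n) = \sum_{\pi \vee \sigma_\ell = \bone_n} \kappa_\pi[\a_1, \ldots, \a_n]$, where $\sigma_\ell \in NC(n)$ has $\{\ell, \ell+1\}$ as its only non-singleton block. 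The left side vanishes by the inductive hypothesis, the merged word being shorter and still having non-constant indices. On the right, $\pi = \bone_n$ contributes $\kappa_n(\a_1, \ldots, \a_n)$, while any other $\pi$ with $\pi \vee \sigma_\ell = \bone_n$ must have exactly two blocks, with $\ell$ and $\ell+1$ in different ones (a partition with three or more blocks, or with $\ell$ and $\ell+1$ in the same block, cannot join with $\sigma_\ell$ up to $\bone_n$); centredness kills those with a singleton block, the inductive hypothesis kills those with a mixed block of size $\ge 2$, and if both blocks are monochromatic then, as $\ell$ and $\ell+1$ sit in different blocks and $i_\ell = i_{\ell+1}$, both blocks must carry the same index, forcing all of $i_1, \ldots, i_n$ to be equal --- a contradiction. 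Hence the right side reduces to $\kappa_n(\a_1, \ldots, \a_n)$, which is therefore $0$. The crux, then, is to have the product formula at hand and to keep careful track of which non-crossing partitions occur at each stage; once that bookkeeping is in place, the surviving contributions conveniently cancel.
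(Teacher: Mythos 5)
Your proof is correct, but note that the paper itself does not prove Proposition~\ref{prop:freeness_equivalence} at all: it is quoted as a standard result (Speicher's characterisation, Theorem~11.16 of \cite{nica2006lectures}), so there is no in-paper argument to compare against. What you have written is essentially a faithful reconstruction of that textbook proof, and all the load-bearing pieces are in place: the easy direction via the moment--cumulant expansion plus the fact that a non-crossing partition with no singleton blocks contains a block with two consecutive elements (your minimal-gap argument is the right one); and the hard direction by induction, using the vanishing of cumulants with a $\unit$ entry to centre, handling the alternating case directly from the definition of freeness, and reducing the non-alternating case via the product formula $\kappa_{n-1}(\ldots,\a_\ell\a_{\ell+1},\ldots)=\sum_{\pi\vee\sigma_\ell=\bone_n}\kappa_\pi$ (Theorem~11.12 of \cite{nica2006lectures}), together with the correct bookkeeping that the merged word still has non-constant indices and that the monochromatic two-block terms would force a constant index word. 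The only point I would ask you to spell out is the parenthetical claim that $\pi\ne\bone_n$ and $\pi\vee\sigma_\ell=\bone_n$ force $\pi$ to have exactly two blocks separating $\ell$ and $\ell+1$: since the join is taken in $NC(n)$ rather than in the lattice of all partitions, you should check (a two-line non-crossingness argument) that merging the blocks of $\ell$ and $\ell+1$ in a non-crossing $\pi$ again yields a non-crossing partition, so that the $NC$-join coincides with the set-partition join here; with that line added, the argument is complete.
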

We will need to be able to compute the free cumulants of products of free variables. In this regard, the following formula will be crucial (see Theorem 14.14 of \cite{nica2006lectures}): Suppose $\{\a_1, \ldots, \a_n\}$ and $\{\b_1, \ldots, \b_n\}$ are freely independent. Then, for all $n \ge 1$,
\begin{equation}\label{eq:cumulant_two}
\kappa_n(\a_1\b_1, \ldots, \a_n\b_n) = \sum_{\pi \in NC(n)} \kappa_{\pi}[\a_1, \ldots, \a_n] \kappa_{K(\pi)}[\b_1, \ldots, \b_n].
\end{equation}
Here $K$ is a bijection on $NC(n)$, known as the \textit{Kreweras complementation map}. Consider the following totally ordered set of $2n$ elements: $\{1, \bar{1}, 2, \bar{2}, \ldots, n, \bar{n} \}$. Let $\pi$ be a non-crossing partition of $\{1, \ldots, n\}$. Then $K(\pi)$ is defined as the largest non-crossing partition $\sigma$ of $\{\bar{1}, \ldots, \bar{n}\}$ such that $\pi \vee \sigma \in NC(2n)$. In particular, $K(\bzero_n) = \bone_n$ and $K(\bone_n) = \bzero_n$. For any $\pi$ in $NC(n)$, one has $|K(\pi)| + |\pi| = n + 1$.

\subsection{Joint distribution and convergence}
Let $(\cA, \state)$ be a non-commutative probability space. The joint distribution of $\a_1, \ldots, \a_k \in \cA$ is the linear functional $\mu : \C\langle X_1, \ldots, X_k\rangle \rightarrow \C$ given by
\[
    \mu(P) = \state(P(\a_1, \ldots, \a_k)),
\]
where $\C\langle X_1, \ldots, X_k \rangle$ is the set of all non-commutative polynomials in the indeterminates $X_1, \ldots, X_k$.

Let $(\cA_n, \state_n), n \ge 1$, be non-commutative probability spaces. Let $\a_1^{(n)}, \ldots, \a_k^{(n)} \in \cA_n$, with joint distribution $\mu_n$. We say that $\a_1^{(n)}, \ldots, \a_k^{(n)}$ converges in distribution in the algebraic sense (or just converges for our purposes) to $\a_1, \ldots, \a_k$ if 
\[
    \mu_n(P) \rightarrow \mu(P) \text{ for all } P \in \C\langle X_1, \ldots, X_k \rangle.
\]
If the limiting variables $\a_1, \ldots, \a_k \in \cA$ are freely independent, then we say that $\a_1^{(n)}, \ldots, \a_k^{(n)}$ are \textit{asymptotically free}.
In case of $*$-probability spaces, we have the same definition, except now $\C\langle X_1, \ldots, X_k \rangle$ is replaced by $\C\langle X_1, X_1^*, \ldots, X_k, X_k^*\rangle$. 
\section{Free Poisson thinning}\label{sec:free_thinning}
Let $(\cA, \state)$ be a non-commutative probability space. We say that $\s \in \cA$ is a \textit{semi-circular}\footnote{It is customary to define such variables as self-adjoint elements of a $*$-probability space. However, since our main arguments will not involve the $*$-operation, and will only make use of purely algebraic relations between moments and free cumulants, we have chosen to work in vanilla non-commutative probability spaces. We will mention what happens when we make extra assumptions such as self-adjointness of variables, faithfulness of the state, etc.} variable with mean $0$ and variance $\alpha^2$ if, for all $k \ge 1$,
\[
\state(\s^{2k}) = \alpha^{2k} \#NC(k) = \frac{\alpha^{2k}}{k + 1}\binom{2k}{k},
\]
and $\state(\s^{2k - 1}) = 0$. The moments of a semi-circular variable determine a unique compactly-supported probability measure called the semi-circular law that arises in random matrix theory as the limiting spectral distribution of Wigner matrices.

We say that $\p \in \cA$ is a \textit{\fpois{}} variable with \textit{rate} $\lambda > 0$ and \textit{jump size} $\alpha > 0$ if its free cumulants are given by
\[
\kappa_n(\p) = \kappa_n(\p, \ldots, \p) = \lambda \alpha^n, n \ge 1.
\]
Let $\s$ be a semi-circular variable with mean $0$ and variance $\alpha^2$. Then it is known that, for any idempotent $\a \in \cA$ (i.e. $\a^2 = \a$) with $\state(\a) = \lambda > 0$, the element $\s\a\s$ is a \fpois{} variable with rate $\lambda$ and jump size $\alpha$. In particular, $\s^2$ is a \fpois{} variable with rate $1$. If $\alpha = 1$, then we call $\p$ a standard \fpois{}. The moments of a \fpois{} variable determine a unique compactly-supported probability measure related to the famous \textit{Mar\u{c}enko-Pastur} law in random matrix theory, arising as the limiting spectral distribution of high-dimensional sample covariance matrices. If $\bX$ is an $n \times m$ matrix with standard Gaussian entries, and $\frac{m}{n} \rightarrow c \in (0, \infty)$, then the limiting spectral distribution of the sample covariance matrix $\frac{1}{n} \bX^\top \bX$ is the Mar\u{c}enko-Pastur law with parameter $c$. This is also the law determined by the moments of a \fpois{} variable with rate $\frac{1}{c}$ and jump size $c$. 

We say that $\b \in \cA$ is a \textit{\fbern{}} variable with parameter $p \in [0, 1]$ if all of its moments $\state(\b^n)$, $n \ge 1$, are equal to $p$. If $\b$ is a \fbern{} variable with parameter $p$, then it can be easily seen that $\unit - \b$ is a \fbern{} variable with parameter $1 - p$. Clearly, any idempotent element $\b$ is \fbern{} if $\state(\b) \in [0, 1]$.

\begin{proposition}
	An idempotent $\b$ in a $*$-probability space $(\cA, \state)$ is a \fbern{} if any one of the following two conditions holds.
	\begin{enumerate}
		\item[(a)] $\b$ is a projection, i.e. a self-adjoint idempotent element. 
		\item[(b)] $(\cA, \state)$ is a tracial $C^*$-probability space.
	\end{enumerate}
\end{proposition}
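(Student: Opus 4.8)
The plan rests on a simple observation: since $\b^2 = \b$, we have $\b^n = \b$ for every $n \ge 1$, so all the moments $\state(\b^n)$ coincide with the single number $p := \state(\b)$. Hence $\b$ is automatically a free Bernoulli variable the moment we know $p \in [0,1]$, and the whole content of the proposition is that each of (a), (b) forces $\state(\b) \in [0,1]$. In case (a), $\b$ being a projection gives $\state(\b) = \state(\b^* \b) \ge 0$ by positivity of $\state$; and $\unit - \b$ is again a self-adjoint idempotent (it is visibly self-adjoint, and $(\unit - \b)^2 = \unit - 2\b + \b^2 = \unit - \b$), so $1 - \state(\b) = \state\big((\unit-\b)^*(\unit-\b)\big) \ge 0$ as well. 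Thus $\state(\b) \in [0,1]$, and (a) is done.

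For (b) the idea is to replace the idempotent $\b$ by an honest projection carrying the same trace, using the classical fact that every idempotent in a $C^*$-algebra is similar to a projection. Concretely, I would set
\[
z := \unit + (\b - \b^*)(\b - \b^*)^* = \unit + (\b - \b^*)(\b^* - \b).
\]
Because $(\b-\b^*)(\b^*-\b) \ge 0$, we have $z \ge \unit$, so $z$ is positive and invertible in the $C^*$-algebra $\cA$, and $z^{-1}$ — indeed any continuous function of $z$ — lies in the unital $C^*$-subalgebra generated by $z$. Expanding $z$ with the relations $\b^2 = \b$, $(\b^*)^2 = \b^*$ and multiplying out, one checks $z\b = \b z = \b\b^*\b$; taking adjoints, $z$ also commutes with $\b^*$, and hence $z^{-1}$ commutes with both $\b$ and $\b^*$.

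Now put $p := z^{-1}\b\b^*$. Since $z^{-1}$ is self-adjoint and commutes with $\b\b^*$, one gets $p^* = p$; since $\b\b^*\b\b^* = (z\b)\b^* = z\,\b\b^*$, one gets $p^2 = z^{-2}\b\b^*\b\b^* = z^{-1}\b\b^* = p$; and from $z\b = \b\b^*\b$ together with $\b^2 = \b$ one gets $p\b = z^{-1}(z\b) = \b$ and $\b p = z^{-1}\b^2\b^* = z^{-1}\b\b^* = p$. So $p$ is a projection. Finally, since $\state$ is tracial, $\state(\b) = \state(p\b) = \state(\b p) = \state(p)$, and part (a) applied to the projection $p$ gives $\state(p) \in [0,1]$; hence $\state(\b) \in [0,1]$, as required.

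The only genuine work is in case (b): constructing $p$ and verifying its four algebraic properties, which is where both hypotheses enter — the $C^*$-structure to make sense of $z^{-1}$ and the trace property to transfer $\state(\b)$ to $\state(p)$. I expect the slightly fiddly (but routine) identities $p^* = p$, $p^2 = p$, $p\b = \b$, $\b p = p$ to be the step most prone to slips; once $z\b = \b z = \b\b^*\b$ is established, everything else is bookkeeping.
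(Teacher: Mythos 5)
Your proof is correct, and it is built around the same canonical projection as the paper's: your $p = z^{-1}\b\b^*$ with $z = \unit + (\b-\b^*)(\b^*-\b)$ is exactly the element $\tilde{\b} = \b\b^*(\unit + (\b-\b^*)(\b^*-\b))^{-1}$ that the paper takes from Kaplansky's lemma (the two agree because $z^{-1}$ commutes with $\b\b^*$). The difference is in how traciality is brought to bear. The paper invokes, as a black box, the fact that every idempotent in a $C^*$-algebra is similar to a projection, writes $\b = \e\tilde{\b}\e^{-1}$, and uses the trace property in the form $\state(\e\tilde{\b}\e^{-1}) = \state(\tilde{\b})$; you instead verify directly the identities $z\b = \b z = \b\b^*\b$, deduce $p^*=p$, $p^2=p$, $p\b=\b$, $\b p = p$, and use traciality only through $\state(\b)=\state(p\b)=\state(\b p)=\state(p)$. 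This makes your argument self-contained (no appeal to the similarity lemma, and no need for the invertible element $\e$ implementing it), at the cost of a few lines of routine algebra, all of which check out: $z\ge\unit$ gives invertibility, $z$ commutes with $\b$ and $\b^*$, and the four properties of $p$ follow as you state. A small point in your favour: by applying part (a) to $p$ you explicitly get both bounds $0 \le \state(\b) \le 1$, whereas the paper's displayed computation only records $\state(\b) = \state(\tilde{\b}) \ge 0$, leaving the upper bound to the same argument applied to $\unit - \b$ (or to part (a) for $\tilde{\b}$). Your reduction at the start — $\b^n = \b$ for all $n$, so only $\state(\b)\in[0,1]$ needs proof — is exactly the paper's implicit reduction, and your part (a) coincides with the paper's.
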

\begin{proof}
	(a) If $\b$ is a projection, then $\state(\b) = \state(\b^2) = \state(\b^*\b) \ge 0$. Also, in that case, $\unit - \b$ is a projection so that $1 - \state(\b) = \state(\unit - \b) \ge 0$. Therefore $\state(\b) \in [0, 1]$.
	
	(b) In a $C^*$-algebra, every idempotent is similar to a self-adjoint one, i.e. there exists an invertible element $\e$ and a projection $\tilde{\b}$ such that $\b = \e \tilde{\b} \e^{-1}$. In fact, one can take $\tilde{\b} = \b\b^* (\unit + (\b - \b^*)(\b^* - \b))^{-1}$. (See, e.g., Lemma~16 of \cite{kaplansky1953modules}.) If $\state$ is tracial, then $\state(\b) = \state(\e \tilde{\b} \e^{-1}) = \state(\e^{-1} \e \tilde{\b}) = \state(\tilde{\b}) \ge 0$.
\end{proof}

Let $\p$ be a \fpois{} variable, and $\b$ a \fbern{} variable. Then $\p\b$ (or $\b\p$) may be thought of as a free analogue of a thinned Poisson variable in classical probability\footnote{A (loose) analogy goes like this: $S_N$ is the sum of $N$ independent copies of $X_1$. Now if things were not random, we could write $S_N$ as $NX_1$. Although this is not actually true, we at least have the Wald identity $\E S_N = \E N \E X_1$. There is another good reason to call this procedure \fpois{} \emph{thinning}, which comes from random matrix theory considerations. The rate parameter $1$ of the \fpois{} variable $\p$ may be thought of as the asymptotic ``scaled rank'' of a suitable random quadratic form. The Bernoulli variable $\b$ may be thought of as a projection which reduces/thins this scaled rank to $p$. See Proposition~\ref{prop:asymptotic_cochran_forward} and Remark~\ref{rem:thinning_justification}.}.

\begin{proposition}[Free Poisson thinning]\label{prop:thinning_free}
	Let $\p$ be a \fpois{} variable with rate $\lambda > 0$ and jump size $\alpha > 0$. Let $\b$ be a \fbern{} with parameter $p \in (0, 1)$. 
	\begin{enumerate}
		\item[(a)] If $\lambda = 1$, then $\p\b$ and $\p(\unit - \b)$ are free. $\p\b$ is a \fpois{} with rate $p$ and $\p(\unit - \b)$ is a \fpois{} with rate $(1 - p)$.
		\item[(b)] If $\lambda \ne 1$, then neither are $\p\b$ and $\p(\unit - \b)$ \fpois{} nor are they freely independent.
	\end{enumerate}
\end{proposition}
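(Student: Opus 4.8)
The plan is to convert everything into the language of free cumulants and to exploit the product formula \eqref{eq:cumulant_two}. Throughout, $\p$ and $\b$ are taken to be freely independent, the free counterpart of the independence of $N$ and the $X_i$'s in classical thinning. Put $\b_0 := \b$ and $\b_1 := \unit - \b$; the key quantities are the joint free cumulants of the generators, $\kappa_n(\p\b_{\epsilon_1}, \dots, \p\b_{\epsilon_n})$ with $\epsilon_i \in \{0, 1\}$, and since $\{\p\}$ is free from the algebra generated by $\b$ (which contains $\b_0$ and $\b_1$), each such cumulant is, by \eqref{eq:cumulant_two}, determined by the moments of $\p$, the moments of $\b$, and the freeness. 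As a free Bernoulli has exactly the moments of an idempotent element with the same trace, I may therefore assume $\b^2 = \b$, so that $\b^n = \b$, $(\unit - \b)^n = \unit - \b$ and $\b(\unit - \b) = (\unit - \b)\b = \zero$. Since $\kappa_m(\p) = \lambda\alpha^m$ we have $\kappa_\pi[\p, \dots, \p] = \lambda^{|\pi|}\alpha^n$ for $\pi \in NC(n)$, and \eqref{eq:cumulant_two} becomes
\[
\kappa_n(\p\b_{\epsilon_1}, \dots, \p\b_{\epsilon_n}) \;=\; \alpha^n \sum_{\pi \in NC(n)} \lambda^{|\pi|}\, \kappa_{K(\pi)}[\b_{\epsilon_1}, \dots, \b_{\epsilon_n}] \;=\; \alpha^n \lambda^{n+1} \sum_{\sigma \in NC(n)} \lambda^{-|\sigma|}\, \kappa_\sigma[\b_{\epsilon_1}, \dots, \b_{\epsilon_n}],
\]
where the second step uses that $K$ is a bijection of $NC(n)$ with $|K(\pi)| + |\pi| = n + 1$. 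This identity governs both parts.

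For part (a), set $\lambda = 1$: the weights disappear and, by the moment--cumulant formula, the right-hand side is $\alpha^n\, \state(\b_{\epsilon_1}\cdots\b_{\epsilon_n})$. Taking all $\epsilon_i = 0$ gives $\alpha^n\state(\b^n) = \alpha^n p$, so $\p\b$ is a free Poisson with rate $p$ and jump size $\alpha$; taking all $\epsilon_i = 1$ gives $\alpha^n\state((\unit - \b)^n) = \alpha^n(1 - p)$, so $\p(\unit - \b)$ is free Poisson with rate $1 - p$ and jump size $\alpha$; and if the $\epsilon_i$ are not all equal the word $\b_{\epsilon_1}\cdots\b_{\epsilon_n}$ contains a factor $\b(\unit - \b) = \zero$, so that cumulant is $0$. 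It remains to upgrade this vanishing of the mixed cumulants of the two generators $\p\b$, $\p(\unit - \b)$ to the vanishing of every mixed free cumulant of the unital subalgebras they generate, as Proposition~\ref{prop:freeness_equivalence} demands: by multilinearity this reduces to cumulants $\kappa_m(c_1^{k_1}, \dots, c_m^{k_m})$ with $c_i \in \{\p\b, \p(\unit - \b)\}$ not all equal and each $k_i \ge 1$, and expanding the powers as products and invoking the formula for free cumulants having products as entries --- together with the vanishing already established for strings of generators --- kills every surviving term. I expect this last bookkeeping step, routine but fiddly, to be the main obstacle.

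For part (b), take $\lambda \ne 1$. Feeding $\kappa_1(\b) = p$, $\kappa_2(\b) = p(1 - p)$, $\kappa_3(\b) = p(1 - p)(1 - 2p)$ into the identity above yields
\[
\kappa_1(\p\b) = \lambda\alpha p, \qquad \kappa_2(\p\b) = \lambda\alpha^2 p\bigl(\lambda(1 - p) + p\bigr), \qquad \kappa_3(\p\b) = \alpha^3 p\bigl(\lambda^3(1 - p)(1 - 2p) + 3\lambda^2 p(1 - p) + \lambda p^2\bigr),
\]
and therefore $\kappa_1(\p\b)\kappa_3(\p\b) - \kappa_2(\p\b)^2 = \lambda^3 \alpha^4 p^3 (1 - p)(1 - \lambda)$. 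A free Poisson variable has $\kappa_n = \mu\beta^n$, hence $\kappa_1\kappa_3 = \kappa_2^2$; but with $\lambda > 0$, $\alpha > 0$, $p \in (0, 1)$ and $\lambda \ne 1$ the displayed difference is nonzero, so $\p\b$ is not a free Poisson, and the substitution $p \mapsto 1 - p$ gives the same conclusion for $\p(\unit - \b)$. Finally, the identity also produces the mixed cumulant
\[
\kappa_2\bigl(\p\b,\, \p(\unit - \b)\bigr) = \kappa_1(\p)^2\, \kappa_2(\b,\, \unit - \b) + \kappa_2(\p)\, \kappa_1(\b)\, \kappa_1(\unit - \b) = \lambda\alpha^2 p(1 - p)(1 - \lambda) \ne 0,
\]
so by Proposition~\ref{prop:freeness_equivalence} $\p\b$ and $\p(\unit - \b)$ are not freely independent. (Reassuringly, all three obstruction quantities vanish at $\lambda = 1$, consistently with part (a).)
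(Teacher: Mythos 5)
Your proposal is correct and takes essentially the paper's route: the product formula \eqref{eq:cumulant_two} with the Kreweras complement and $|K(\pi)|+|\pi|=n+1$, reduction of the mixed moments of $\b$, $\unit-\b$ to the idempotent case, and low-order cumulant obstructions for $\lambda\ne 1$ (your $\kappa_1\kappa_3-\kappa_2^2=\lambda^3\alpha^4p^3(1-p)(1-\lambda)$ and $\kappa_2(\p\b,\p(\unit-\b))=\lambda\alpha^2p(1-p)(1-\lambda)$ are just a repackaging of the paper's $\kappa_2$, $\kappa_3$ computations). The ``upgrade'' step you flag as the main obstacle is not actually needed: vanishing of all mixed free cumulants with entries from $\{\p\b,\p(\unit-\b)\}$ already characterises freeness of the elements (the element form of Proposition~\ref{prop:freeness_equivalence}, i.e.\ Theorem~11.20 of \cite{nica2006lectures}), which is precisely what the paper's proof invokes.
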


\begin{proof}
	(a) That $\p\b$ is \fpois{} with rate $p$ is well-known (see, e.g., Proposition~12.18 of \cite{nica2006lectures}). We nevertheless give the details because part of the calculation will be used for showing (b). We may assume that $\alpha = 1$. Using \eqref{eq:cumulant_two}, we compute
	\begin{align*} 
	\kappa_n(\p\b) &= \sum_{\pi \in NC(n)} \kappa_{\pi}[\p, \ldots, \p] \kappa_{K(\pi)} [\b, \ldots, \b] \\
	&= \sum_{\pi \in NC(n)} \lambda^{|\pi|} \kappa_{K(\pi)}[\b, \ldots, \b] \\
	&= \sum_{\pi \in NC(n)} \lambda^{n + 1 - |K(\pi)|} \kappa_{K(\pi)}[\b, \ldots, \b] \\
	&= \sum_{\pi \in NC(n)} \lambda^{n + 1 - |\pi|} \kappa_{\pi}[\b, \ldots, \b].
	\end{align*}
	If $\lambda = 1$, then, for all $n \ge 1$, $\kappa_n(\p\b) = \sum_{\pi \in NC(n)} \kappa_{\pi}[\b, \ldots, \b] = \state(\b^n) = p$.
    Hence $\p\b$ is \fpois{} with rate $p$. Similarly, $\p(\unit - \b)$ is \fpois{} with rate $(1 - p)$. Finally, to show the freeness of $\p\b$ and $\p(\unit - \b)$, we will use \eqref{eq:cumulant_two} to compute their mixed free cumulants and show that they vanish. For $m \ge 2$, let $\b_{i_1}, \ldots, \b_{i_m} \in \{\b, \unit - \b\}$. We have
    \begin{align*}
        \kappa_m(\p\b_{i_1}, \ldots, \p\b_{i_m}) &= \sum_{\pi \in NC(m)} \kappa_{\pi}[\p, \ldots, \p] \kappa_{K(\pi)}[\b_{i_1}, \ldots, \b_{i_m}] \\
                                                 &= \sum_{\pi \in NC(m)} \kappa_{K(\pi)}[\b_{i_1}, \ldots, \b_{i_m}] \\
                                                 &= \state_m(\b_{i_1}, \ldots, \b_{i_m}) = \state(\b_{i_1}\cdots\b_{i_m}).
    \end{align*}
    If $\kappa_m(\p\b_{i_1}, \ldots, \p\b_{i_m})$ is a mixed free cumulant, then, for some $m_1, m_2 \ge 1, m_1 + m_2 = m$, one can write
    \begin{align*}
        \state(\b_{i_1}\cdots\b_{i_m}) &= \state(\b^{m_1}(\unit - \b)^{m_2}) \\
                                       &= \sum_{j = 0}^{m_2} \binom{m_2}{j}(-1)^j \state(\b^{m_1 + j}) \\
                                       &= p \sum_{j = 0}^{m_2} \binom{m_2}{j}(-1)^j = 0.
    \end{align*}
    This completes the proof of (a).
	
	(b) If $\lambda \ne 1$, let us explicitly compute the first few free cumulants of $\p\b$. Clearly, $\kappa_1(\p\b) = \state(\p\b) = \state(\p) \state(\b) = \lambda p$. Now
	\begin{align*}
	\kappa_2(\p\b) &= \lambda^2 \kappa_2(\b) + \lambda (\kappa_1(\b))^2 \\
	&= \lambda^2 (p - p^2) + \lambda p^2 \\
	&= \lambda p (\lambda (1 - p) + p).
	\end{align*}
	Similarly, 
	\[
	\kappa_2(\p(\unit - \b)) = \lambda (1 - p) (\lambda p + (1 - p)).
	\]
	Thus
	\begin{align*}
	\kappa_2(\p\b) + \kappa_2(\p(\unit - \b)) &= \lambda + 2\lambda(\lambda - 1) p (1 - p) \\
	&\ne \lambda = \kappa_2(\p).
	\end{align*}
	Thus $\p\b$ and $\p(\unit - \b)$ are not free.
	
	If $\p\b$ were \fpois{} with rate $\mu > 0$ and jump size $\beta > 0$, then we would have $\kappa_n(\p\b) = \mu \beta^n$ for all $n \ge 1$. So we must have
	\begin{align*}
	\mu\beta &= \lambda p, \\
	\nu \beta^2 &= \lambda p (\lambda(1 - p) + p).
	\end{align*}
	This means $\nu = \frac{\lambda p}{\lambda(1 - p) + p)}$, $\beta = \lambda(1 - p) + p \ne 1$. Now
	\begin{align*}
	\kappa_3(\p\b) &= \lambda^3 \kappa_3(\b) + 3 \lambda^2 \kappa_2(\b) \kappa_1(\b) + \lambda (\kappa_1(\b))^3 \\
	&= \lambda^3 (p - 3 p^2 + 2 p^3) + 3 \lambda^2 p(1 - p) p + \lambda p^3 \\
	&= \lambda p (\lambda(1 - p) + p)^2 - \lambda^2 p^2 (\lambda(1 - p) + p) + \lambda^2 p^2 \\
	&= \nu \beta^3 - \nu^2 \beta^3 + \nu^2\beta^2 \\
	&= \nu \beta^3 + \nu^2 \beta^2 (1 - \beta) \ne \nu \beta^3.
	\end{align*}
	where in the second equality we have used the moment-free cumulant relation $\kappa_3(\b) = \state(\b^3) -3 \state(\b) \state(\b^2) + 2 (\state(\b))^3$. This shows that $\p\b$ is not \fpois{}, and, similarly, neither is $\p(\unit - \b)$.
\end{proof}
\begin{remark}
	In Proposition~\ref{prop:thinning_free}, instead of just \fbern{} variables, we can have any $\b$ such that $\state(\b^n) = \state(\b) \in \C \setminus \{0, 1\}$ for all $n \ge 1$. In particular, all idempotents are allowed. Then $\p\b$ and $\p(\unit - \b)$ will be freely independent or not, depending on whether $\lambda = 1$ or not. In the former case, $\kappa_n(\p\b) = \state(\b)$ and $\kappa_n(\p(\unit  - \b)) = 1 - \state(\b)$ for all $n \ge 1$.
\end{remark}
\begin{remark}
	Suppose $\b$ is \fbern{} with parameter $p \in \{0, 1\}$. Because of symmetry, consider the case $p = 1$. Due to the freeness of $\p$ and $\b$, all of their joint moments and free cumulants are calculable in terms of their individual moments and free cumulants. Since the moments of $\b$ match those of $\unit$, we can replace $\b$ with $\unit$ in our calculations. Then, for any $\lambda > 0$, $\p\b$ and $\p(\unit - \b)$ are trivially freely independent, $\p\b$ is \fpois{} with parameter $\lambda$, and all free cumulants and hence moments of $\p(\unit - \b)$ vanish so that it is \fbern{} with parameter $0$.
\end{remark}
\begin{remark}
	Instead of $\p\b$ and $\p(\unit - \b)$, we can also phrase everything in terms of $\b\p$ and $\b(\unit - \p)$.
\end{remark}

Thus, unlike the classical case, we need the rate parameter of our \fpois{} variable to be $1$ for thinning to happen\footnote{One of the anonymous referees pointed out that by mixing classical and free independence, one can have a thinning procedure that works for any $\lambda \in [0, 1]$. Let $\p$ be a \fpois{} with parameter $\lambda \in [0, 1]$. Then, as discussed earlier, we can represent $\p$ as $\s^2 \l$, where $\s$ is a standard semi-circular variable and $\l$ is a \fbern{} variable with parameter $\lambda \in [0, 1]$. Now let $\b$ be a \fbern{} variable with parameter $p \in [0, 1]$ which is freely independent from $\s$ but classically independent from $\l$. Then, clearly, $\l\b$ is \fbern{} with parameter $\lambda p$ and is freely independent from $\s$. Thus $\p\b = \s^2\l\b$ is \fpois{} with parameter $\lambda p$. Similarly, $\p(\unit - \b)$ is \fpois{} with parameter $\lambda(1 - p)$. Moreover, by a calculation similar to the one in the proof of Proposition~\ref{prop:thinning_free}, one can easily verify that $\p\b$ and $\p(\unit - \b)$ are freely independent. 

In Section~\ref{sec:other_versions}, we will consider another variant of \fpois{} thinning which also does not have the restriction $\lambda = 1$.}. In Theorem~\ref{thm:conv_free} below, we prove a converse of this result: If $\p$ and $\b$ are two free elements in a non-commutative probability space $(\cA, \state)$ such that $\p\b$ and $\p(\unit - \b)$ are free, then (a) $\p$ must be \fpois{} with rate $1$ and jump size $\state(\p)$ when it is known that $\b$ is \fbern{}, and (b) $\b$ must be a \fbern{} when it is known that $\p$ is \fpois{} with rate $1$.

\subsection{Connections with Cochran's theorem}\label{sec:cochran}
The free analogue of Poisson thinning arises as an asymptotic version of a famous result in multivariate statistics known as Cochran's theorem. Let $\bX$ be an $n \times m$ random matrix whose rows are i.i.d. $m$-variate Gaussians with mean zero and covariance matrix $\bSigma$. We will express this as $\bX \sim \MN_{n, m}(\bzero, \bSigma)$. Then the distribution of the random matrix $\bX^\top \bX$ is called the $m$-dimensional Wishart distribution with $n$ degrees of freedom and covariance matrix $\bSigma$, denoted as $\mathrm{Wishart}_m(\bSigma, n)$. The following result is known as Cochran's theorem\footnote{Cochran only considered the one-dimensional version in his paper \cite{cochran1934distribution}.} (see, e.g., Theorem 3.4.4 of \cite{mardia1979multivariate}).

\begin{proposition}\label{prop:cochran}
	Let $\bX \sim \MN_{n, m}(\bzero, \bSigma)$ and $\bB$ be an $n \times n$ symmetric matrix. Then $\bX^\top \bB \bX$ is Wishart distributed if and only if $\bB$ is idempotent, in which case $\bX^\top \bB \bX \sim \mathrm{Wishart}_m(\bSigma, \rank(\bB))$. 
\end{proposition}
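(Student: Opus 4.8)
The plan is to establish the two implications separately. The forward (``if'') direction is a direct change of variables; the converse is the substantive part, and I would reduce it to the one-dimensional version of Cochran's theorem by passing to scalar marginals.

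For the ``if'' direction, suppose $\bB$ is symmetric and idempotent, hence an orthogonal projection of rank $r = \tr(\bB) = \rank(\bB)$. Diagonalise $\bB = \mathbf{Q}\mathbf{D}\mathbf{Q}^\top$ with $\mathbf{Q}$ orthogonal and $\mathbf{D} = \mathrm{diag}(1,\dots,1,0,\dots,0)$ ($r$ ones), and set $\bY = \mathbf{Q}^\top\bX$. Since $\mathrm{vec}(\bX)\sim N(\bzero,\bSigma\otimes\bI_n)$ and left multiplication by $\mathbf{Q}^\top$ acts on $\mathrm{vec}(\bX)$ as $\bI_m\otimes\mathbf{Q}^\top$, one gets $\mathrm{vec}(\bY)\sim N(\bzero,\bSigma\otimes\bI_n)$, i.e.\ $\bY\sim\MN_{n,m}(\bzero,\bSigma)$ as well. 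Then $\bX^\top\bB\bX = \bY^\top\mathbf{D}\bY = \sum_{i=1}^{r}\mathbf{y}_i\mathbf{y}_i^\top$, where $\mathbf{y}_1,\dots,\mathbf{y}_r\in\R^m$ are the transposes of the first $r$ rows of $\bY$, hence i.i.d.\ $N(\bzero,\bSigma)$; by definition this is $\mathrm{Wishart}_m(\bSigma,r)$.

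For the ``only if'' direction I would read ``$\bX^\top\bB\bX$ is Wishart distributed'' as $\bX^\top\bB\bX\sim\mathrm{Wishart}_m(\bSigma,k)$ for some $k$, the scale matrix being inherited from $\bX$ — this is what the stated equivalence needs, since e.g.\ $\bB = 2\bI_n$ yields $\bX^\top\bB\bX\sim\mathrm{Wishart}_m(2\bSigma,n)$. Fix $\mathbf{u}\in\R^m$ with $\mathbf{u}^\top\bSigma\mathbf{u}>0$ (possible as $\bSigma\ne\bzero$). The rows of $\bX$ being i.i.d.\ $N(\bzero,\bSigma)$, we have $\bX\mathbf{u}\sim N(\bzero,(\mathbf{u}^\top\bSigma\mathbf{u})\bI_n)$, so $\mathbf{u}^\top\bX^\top\bB\bX\mathbf{u}=(\bX\mathbf{u})^\top\bB(\bX\mathbf{u})$ is distributed as $(\mathbf{u}^\top\bSigma\mathbf{u})\,\mathbf{z}^\top\bB\mathbf{z}$ with $\mathbf{z}\sim N(\bzero,\bI_n)$. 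On the other hand, for $\bW\sim\mathrm{Wishart}_m(\bSigma,k)$ one has $\mathbf{u}^\top\bW\mathbf{u}\sim(\mathbf{u}^\top\bSigma\mathbf{u})\,\chi^2_k$. Hence $\mathbf{z}^\top\bB\mathbf{z}\sim\chi^2_k$, and it remains to run the scalar Cochran argument: with $\beta_1,\dots,\beta_n$ the eigenvalues of the symmetric matrix $\bB$, the moment generating function of $\mathbf{z}^\top\bB\mathbf{z}$ is $\prod_{j=1}^{n}(1-2t\beta_j)^{-1/2}$ for small $t$; equating this to the $\chi^2_k$ moment generating function $(1-2t)^{-k/2}$ and squaring gives the polynomial identity $\prod_{j=1}^{n}(1-2t\beta_j)=(1-2t)^k$. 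Matching degrees forces $\#\{j:\beta_j\ne0\}=k$, and matching roots forces every nonzero $\beta_j$ to equal $1$; so $\bB$ is symmetric with spectrum in $\{0,1\}$, whence $\bB^2=\bB$ with $\rank(\bB)=k$, and combined with the first part $\bX^\top\bB\bX\sim\mathrm{Wishart}_m(\bSigma,\rank(\bB))$.

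I do not expect a genuine obstacle: the one real idea is the reduction to one dimension, after which idempotency is forced purely by comparing roots of polynomials. The points needing care are the orthogonal invariance of $\MN_{n,m}(\bzero,\bSigma)$ under left multiplication (routine via $\mathrm{vec}$ and Kronecker products), the standard quadratic-form-in-Gaussians moment generating function computation, and the reading of ``Wishart distributed'' with the scale matrix fixed to $\bSigma$.
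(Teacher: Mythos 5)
The paper does not prove this proposition at all: it is quoted as a classical result (Cochran's theorem) with a pointer to Theorem~3.4.4 of Mardia, Kent and Bibby, so there is no in-paper argument to compare against. Judged on its own, your proof is correct and is essentially the standard textbook argument. The forward direction via $\bB = \mathbf{Q}\mathbf{D}\mathbf{Q}^\top$ and the orthogonal invariance $\mathbf{Q}^\top\bX \sim \MN_{n,m}(\bzero,\bSigma)$ is fine, and the converse correctly reduces to the scalar case: from $\mathbf{u}^\top\bX^\top\bB\bX\mathbf{u} \overset{d}{=} (\mathbf{u}^\top\bSigma\mathbf{u})\,\mathbf{z}^\top\bB\mathbf{z}$ and the $\chi^2_k$ law of $\mathbf{u}^\top\bW\mathbf{u}/(\mathbf{u}^\top\bSigma\mathbf{u})$, the moment generating function identity $\prod_j(1-2t\beta_j) = (1-2t)^k$ on a neighbourhood of $0$ extends to a polynomial identity, forcing the nonzero eigenvalues of $\bB$ to be $1$ with multiplicity $k$, i.e.\ $\bB^2 = \bB$ and $\rank(\bB) = k$. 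Your preliminary remark that ``Wishart distributed'' must be read with scale matrix $\bSigma$ (otherwise $\bB = 2\bI_n$ is a counterexample) is exactly the right reading of the classical statement and is worth making explicit; the only residual degeneracies (e.g.\ $\bSigma = \bzero$, or zero degrees of freedom) are harmless and handled implicitly by the standing assumptions of the proposition.
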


If $\bB$ is idempotent, then $\bB(\bI - \bB) = 0$ so that $\bB \bX$ and $(\bI - \bB) \bX$ are independent Gaussian matrices. This gives the following result. 
\begin{corollary}\label{cor:cochran}
	Let $\bX \sim \MN_{n, m}(\bzero, \bSigma)$ and $\bB$ be an $n \times n$ symmetric idempotent matrix. Then $\bX^\top \bB \bX$ and $\bX^\top (\bI - \bB) \bX$ are independent, with $\bX^\top \bB \bX$ and $\bX^\top(\bI - \bB) \bX$ having respectively $\mathrm{Wishart}_m(\bSigma, \rank(\bB))$ and $\mathrm{Wishart}_m(\bSigma, n - \rank(\bB))$ distributions.
\end{corollary}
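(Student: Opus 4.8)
The plan is to exploit the hint: since $\bB$ is symmetric and idempotent, so is $\bI - \bB$, and $\bB(\bI - \bB) = \bB - \bB^2 = \bzero$. First I would note that the block matrix obtained by stacking the rows of $\bB\bX$ on top of those of $(\bI - \bB)\bX$ is a linear image of $\bX$, hence jointly matrix-variate Gaussian with mean zero. So to establish independence of $\bB\bX$ and $(\bI - \bB)\bX$ it suffices to check that every entry of $\bB\bX$ is uncorrelated with every entry of $(\bI - \bB)\bX$. Computing the covariance of $(\bB\bX)_{ij}$ and $((\bI - \bB)\bX)_{kl}$ and using that the rows of $\bX$ are i.i.d.\ $N(\bzero, \bSigma)$, this covariance factors as $\bSigma_{jl}\,(\bB(\bI - \bB)^\top)_{ik} = \bSigma_{jl}\,(\bB(\bI - \bB))_{ik} = 0$. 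Therefore $\bB\bX$ and $(\bI - \bB)\bX$ are independent.

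Next I would rewrite the two quadratic forms as Gram matrices of these independent Gaussian blocks: using $\bB = \bB^\top = \bB^2$,
\[
\bX^\top \bB \bX = \bX^\top \bB^\top \bB \bX = (\bB\bX)^\top (\bB\bX),
\]
and similarly $\bX^\top (\bI - \bB)\bX = ((\bI - \bB)\bX)^\top ((\bI - \bB)\bX)$. The first is a (measurable) function of $\bB\bX$ alone and the second of $(\bI - \bB)\bX$ alone, so the independence of the two blocks transfers immediately to the independence of $\bX^\top \bB\bX$ and $\bX^\top(\bI - \bB)\bX$.

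Finally, for the marginal laws I would invoke Cochran's theorem (Proposition~\ref{prop:cochran}) separately for $\bB$ and for $\bI - \bB$, both symmetric idempotents, obtaining $\bX^\top \bB\bX \sim \mathrm{Wishart}_m(\bSigma, \rank(\bB))$ and $\bX^\top(\bI - \bB)\bX \sim \mathrm{Wishart}_m(\bSigma, \rank(\bI - \bB))$. It then remains to observe $\rank(\bI - \bB) = n - \rank(\bB)$, which follows because an idempotent matrix has rank equal to its trace, so $\rank(\bI - \bB) = \tr(\bI - \bB) = n - \tr(\bB) = n - \rank(\bB)$ (equivalently, $\bB$ is diagonalisable with eigenvalues in $\{0,1\}$ and $\bI - \bB$ carries the complementary eigenvalues). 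There is no genuine obstacle here; the one point needing care is the independence step, namely being entitled to pass from ``uncorrelated'' to ``independent'', which is exactly where the joint Gaussianity of the pair $(\bB\bX, (\bI - \bB)\bX)$ is used.
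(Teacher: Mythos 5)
Your proposal is correct and follows essentially the same route as the paper: it uses $\bB(\bI-\bB)=\bzero$ to get independence of the jointly Gaussian blocks $\bB\bX$ and $(\bI-\bB)\bX$, writes $\bX^\top\bB\bX=(\bB\bX)^\top(\bB\bX)$ and $\bX^\top(\bI-\bB)\bX=((\bI-\bB)\bX)^\top((\bI-\bB)\bX)$, and invokes Proposition~\ref{prop:cochran} for the marginal Wishart laws. You merely spell out the uncorrelated-implies-independent step and the identity $\rank(\bI-\bB)=n-\rank(\bB)$, which the paper leaves implicit.
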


In fact, the following result known as Craig's theorem\footnote{The proof given in Craig's paper \cite{craig1943note} is incomplete; a complete proof was given in \cite{ogawa1949independence}.} (see, e.g., Theorem~3.4.5 of \cite{mardia1979multivariate}) gives necessary and sufficient conditions for the independence of such quadratic functions. 
\begin{proposition}[Craig's theorem]
	Let $\bX \sim \MN_{n, m}(\bzero, \bSigma)$ and $\bC_1, \ldots, \bC_k$ be symmetric matrices. Then the random matrices $\bX^\top \bC_1 \bX, \ldots, \bX^\top \bC_k \bX$ are independent if and only if $\bC_r\bC_s = \bzero$ for all $r \ne s$. 
\end{proposition}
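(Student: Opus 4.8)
The plan is to characterise the stated independence through the joint moment generating function --- which, for quadratic functions of a Gaussian, is an explicit determinant --- and then to squeeze the resulting determinantal identity down to the algebraic condition $\bC_r\bC_s=\bzero$; I take $\bSigma$ nonsingular throughout. \textbf{Reductions.} First I would write $\bX=\mathbf{Z}\bSigma^{1/2}$ with $\mathbf{Z}$ having i.i.d.\ standard Gaussian entries, so that $\bX^\top\bC_r\bX=\bSigma^{1/2}(\mathbf{Z}^\top\bC_r\mathbf{Z})\bSigma^{1/2}$. Conjugation by the fixed invertible $\bSigma^{1/2}$ is a linear bijection on symmetric matrices, hence preserves joint independence in both directions; since moreover the $\bC_r$ act only on the ``sample'' index, it suffices to treat $\bSigma=\bI$, i.e.\ to show that $\mathbf{Z}^\top\bC_1\mathbf{Z},\ldots,\mathbf{Z}^\top\bC_k\mathbf{Z}$ are independent iff $\bC_r\bC_s=\bzero$ for all $r\ne s$. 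For the ``if'' direction, $\bC_r\bC_s=\bzero$ together with symmetry forces $\bC_s\bC_r=(\bC_r\bC_s)^\top=\bzero$, so the ranges of the $\bC_r$ are pairwise orthogonal; letting $P_r$ be the orthogonal projection onto $\mathrm{range}(\bC_r)$, one has $\bC_r=P_r\bC_rP_r$, hence $\mathbf{Z}^\top\bC_r\mathbf{Z}=(P_r\mathbf{Z})^\top\bC_r(P_r\mathbf{Z})$ depends only on $P_r\mathbf{Z}$; the Gaussian matrices $P_1\mathbf{Z},\ldots,P_k\mathbf{Z}$ are uncorrelated across distinct indices (because $P_rP_s=\bzero$), hence mutually independent, and the conclusion follows. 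For the ``only if'' direction, comparing $(1,1)$ entries reduces matters to scalar quadratic forms: it is enough to show that if $z\sim\mathcal{N}(\bzero,\bI_n)$ and $z^\top\bC_1z,\ldots,z^\top\bC_kz$ are independent, then $\bC_r\bC_s=\bzero$ for $r\ne s$.

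\textbf{The determinantal identity.} For symmetric $A$ with $\bI-2A$ positive definite, $\E\exp(z^\top Az)=\det(\bI-2A)^{-1/2}$, so the joint moment generating function of $z^\top\bC_1z,\ldots,z^\top\bC_kz$ is $t\mapsto\det\!\big(\bI-2\sum_r t_r\bC_r\big)^{-1/2}$, finite near the origin. Independence makes it factor into its marginals, so
\[
\det\Big(\bI-2\sum_{r=1}^{k}t_r\bC_r\Big)=\prod_{r=1}^{k}\det\big(\bI-2t_r\bC_r\big)
\]
for $t$ near $\bzero$, hence --- both sides being polynomials in $t$ --- for all $t\in\C^k$. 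Taking logarithms and using $\log\det(\bI-\bM)=-\sum_{j\ge1}\frac1j\tr(\bM^j)$ for $\|\bM\|$ small, then matching power-series coefficients in $t$, I obtain
\[
\tr\Big(\big(\textstyle\sum_{r=1}^{k}t_r\bC_r\big)^{\!j}\Big)=\sum_{r=1}^{k}t_r^{\,j}\,\tr(\bC_r^j)\qquad\text{for every }j\ge1,
\]
again as a polynomial identity in $t$.

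\textbf{Extracting the condition.} Fix $r\ne s$ and set $t_i=0$ for $i\notin\{r,s\}$, so that $\tr((t_r\bC_r+t_s\bC_s)^j)=t_r^{\,j}\tr(\bC_r^j)+t_s^{\,j}\tr(\bC_s^j)$ for all $j$ and all $t_r,t_s$. Taking $j=4$ and comparing the coefficient of $t_r^2t_s^2$ (which is absent on the right-hand side), the six length-$4$ words in $\bC_r,\bC_s$ with two of each (up to cyclic permutation, four have trace $\tr(\bC_r^2\bC_s^2)$ and two have trace $\tr(\bC_r\bC_s\bC_r\bC_s)$) give
\[
4\,\tr(\bC_r^2\bC_s^2)+2\,\tr(\bC_r\bC_s\bC_r\bC_s)=0.
\]
Put $\bM:=\bC_r\bC_s$. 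Since $\bC_r,\bC_s$ are symmetric, $\tr(\bC_r^2\bC_s^2)=\tr(\bM\bM^\top)\ge0$ and $\tr(\bC_r\bC_s\bC_r\bC_s)=\tr(\bM^2)$, while $|\tr(\bM^2)|\le\tr(\bM\bM^\top)$ by the Cauchy--Schwarz inequality for the Frobenius inner product. Hence $0=4\tr(\bM\bM^\top)+2\tr(\bM^2)\ge2\tr(\bM\bM^\top)\ge0$, so $\bM=\bzero$, i.e.\ $\bC_r\bC_s=\bzero$.

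\textbf{Main obstacle.} The delicate step is the last one. The low-degree comparison ($j=2$) yields only $\tr(\bC_r\bC_s)=0$, which is far weaker than $\bC_r\bC_s=\bzero$; one really needs the degree-$4$ relation, combined with the symmetry of the $\bC_r$ (which is what makes the bound $|\tr(\bM^2)|\le\tr(\bM\bM^\top)$ usable), to force the product to vanish. A minor point worth noting is that the equivalence is clean when $\bSigma$ is nonsingular; for $\bSigma=\bzero$ all the $\bX^\top\bC_r\bX$ vanish identically and the ``only if'' direction is vacuous, so some nondegeneracy assumption on $\bSigma$ is needed.
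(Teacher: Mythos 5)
Your proof is correct. The paper itself does not prove this proposition --- it is quoted from the literature (Theorem~3.4.5 of Mardia et al., with the footnote crediting Ogawa for the first complete proof) --- so your argument supplies a self-contained proof rather than paralleling one in the text. Your route is essentially the classical one up to the key identity: reduce to $\bSigma = \bI$ and to scalar forms $z^\top \bC_r z$, and use the Gaussian moment generating function to get the determinant factorisation $\det(\bI - 2\sum_r t_r \bC_r) = \prod_r \det(\bI - 2t_r\bC_r)$. Where you genuinely diverge is in the implication ``factorisation $\Rightarrow \bC_r\bC_s = \bzero$'': Ogawa-style proofs extract this by an eigenvalue/rank analysis of the determinant identity, which is exactly the step Craig's original paper fumbled, whereas you expand $\log\det$ into traces, isolate the coefficient of $t_r^2t_s^2$ at degree $4$, and finish with Cauchy--Schwarz for the Frobenius inner product ($4\tr(\bM\bM^\top) + 2\tr(\bM^2) = 0$ with $\bM = \bC_r\bC_s$ forces $\bM = \bzero$). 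That is an elementary and clean completion, and your own remark that the $j=2$ comparison only gives $\tr(\bC_r\bC_s)=0$ correctly identifies why the degree-$4$ relation plus symmetry is the crux. Two small points: your converse only ever uses pairwise comparisons, which is fine since mutual independence implies pairwise independence, and the ``if'' direction does deliver full mutual independence via the orthogonal projections $P_r$ with $P_rP_s=\bzero$; and your caveat about degenerate $\bSigma$ is apt --- the statement, as in Mardia, implicitly requires $\bSigma$ nonzero (nonsingularity can in fact be relaxed to $\bSigma \ne \bzero$ by applying your scalar argument to $\bX u$ for any $u$ with $u^\top\bSigma u > 0$).
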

 Using this, we have the following result.
\begin{proposition}\label{prop:craig}
	Let $\bX \sim \MN_{n, m}(\bzero, \bSigma)$ and $\bB$ be an $n \times n$ symmetric matrix. Then $\bX^\top \bB \bX$ and $\bX^\top (\bI - \bB) \bX$ are independent if and only if $\bB$ is idempotent.
\end{proposition}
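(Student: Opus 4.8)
The plan is to invoke Craig's theorem directly with the two matrices $\bC_1 = \bB$ and $\bC_2 = \bI - \bB$. Note first that $\bI - \bB$ is symmetric whenever $\bB$ is, so Craig's theorem applies to the pair $(\bX^\top \bB \bX, \bX^\top(\bI - \bB)\bX)$. According to that result, these two random matrices are independent if and only if $\bC_1 \bC_2 = \bzero$ and $\bC_2 \bC_1 = \bzero$. I would then observe that, by symmetry of $\bB$, these two conditions coincide: $\bC_2 \bC_1 = (\bI - \bB)\bB = \bB - \bB^2 = \bB(\bI - \bB) = \bC_1 \bC_2$ (one can also see this by transposing, since $(\bC_1\bC_2)^\top = \bC_2^\top \bC_1^\top = \bC_2\bC_1$). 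Hence independence is equivalent to the single matrix equation $\bB - \bB^2 = \bzero$, i.e.\ $\bB^2 = \bB$, which is precisely the statement that $\bB$ is idempotent.

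For completeness I would also record the forward implication without appealing to Craig, since it is elementary and already implicit in the discussion preceding Corollary~\ref{cor:cochran}: if $\bB$ is idempotent then $\bB(\bI - \bB) = \bzero$, so the Gaussian matrices $\bB\bX$ and $(\bI - \bB)\bX$ are independent (their joint distribution is Gaussian with zero cross-covariance), and therefore the measurable functions $\bX^\top \bB \bX = (\bB\bX)^\top(\bB\bX)$ and $\bX^\top(\bI - \bB)\bX = ((\bI-\bB)\bX)^\top((\bI-\bB)\bX)$ of them are independent as well. The converse is the content that genuinely requires Craig's theorem.

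There is essentially no obstacle here: the proposition is a one-line corollary of Craig's theorem, and the only thing to be careful about is checking that the two-sided product condition $\bC_r\bC_s = \bzero$ for $r \neq s$ collapses to a single equation by symmetry, and that this equation is exactly idempotency. I would therefore keep the proof to a few sentences.

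\begin{proof}
	If $\bB$ is idempotent, then $\bB(\bI - \bB) = \bzero$, so $\bB\bX$ and $(\bI - \bB)\bX$ are jointly Gaussian with vanishing cross-covariance, hence independent; consequently $\bX^\top \bB \bX = (\bB\bX)^\top(\bB\bX)$ and $\bX^\top(\bI - \bB)\bX = ((\bI - \bB)\bX)^\top((\bI - \bB)\bX)$ are independent. Conversely, since $\bB$ is symmetric so is $\bI - \bB$, and Craig's theorem applied to $\bC_1 = \bB$, $\bC_2 = \bI - \bB$ shows that independence of $\bX^\top \bB \bX$ and $\bX^\top(\bI - \bB)\bX$ forces $\bB(\bI - \bB) = \bzero$, i.e.\ $\bB^2 = \bB$, so $\bB$ is idempotent.
\end{proof}
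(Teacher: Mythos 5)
Your proof is correct and follows the same route the paper intends: Proposition~\ref{prop:craig} is stated there as an immediate consequence of Craig's theorem applied to $\bC_1 = \bB$, $\bC_2 = \bI - \bB$, with independence equivalent to $\bB(\bI - \bB) = \bzero$, i.e.\ idempotency. Your extra remark that the forward direction also follows elementarily from the independence of $\bB\bX$ and $(\bI - \bB)\bX$ matches the discussion preceding Corollary~\ref{cor:cochran}.
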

Consider now the high-dimensional asymptotic regime $\frac{m}{n} \rightarrow c \in (0, \infty)$. Assume that $\bX_n \sim \MN_{n, m}(\bzero, \bI)$ and $(\bB_n)_{n \ge 1}$ is a sequence of symmetric idempotent matrices such that $\frac{\rank(\bB_n)}{n} \rightarrow p \in (0, 1)$. Then, it is a well-known result that, as members of the non-commutative probability space $(\mathcal{M}_n(L^{\infty, -}(\Omega, \mathbb{P})), \E \frac{1}{n}\tr)$, the sample covariance matrix $\frac{1}{m}\bX_n\bX_n^\top$ and the deterministic matrix $\bB_n$ converge jointly to a pair $(\p, \b)$ in some non-commutative probability space $(\cA, \state)$, where $\p$ is \fpois{} with rate $c$ and jump size $\frac{1}{c}$, $\b$ is \fbern{} with parameter $p$, and $\p, \b$ are freely independent\footnote{As the law of $\frac{1}{m}\bX_n\bX_n^\top$ is invariant under orthogonal conjugation, asymptotic freeness of $\frac{1}{m}\bX_n \bX_n^\top$ and $\bB_n$ is a simple consequence of Theorem~5.2 of \cite{collins2006integration}, for instance, via Proposition~2.9 of \cite{collins2021asymptotic}.}. That is,
\[
    \E\frac{1}{n}\tr (P(\frac{1}{m}\bX_n\bX_n^\top, \bB_n)) \rightarrow \state(P(\p, \b))
\]
for any polynomial $P$ in two-variables. Using this we can compute the joint distribution of $\frac{1}{n}\bX_n^\top\bB_n\bX_n$ and $\frac{1}{n}\bX_n^\top(\bI_n - \bB_n) \bX_n$. For any polynomial $P$ in two variables, we have
\begin{align*}
\E \frac{1}{m}\tr (P(\frac{1}{n}\bX_n^\top \bB_n \bX_n, &\frac{1}{n}\bX_n^\top (\bI_n - \bB_n) \bX_n)) \\
&= \E \frac{1}{m} \tr (P(\frac{1}{n}\bX_n\bX_n^\top \bB_n, \frac{1}{n}\bX_n\bX_n^\top (\bI_n - \bB_n))) \\
&= \frac{n}{m} \E \frac{1}{n}\tr (P(\frac{m}{n}\frac{1}{m}\bX_n\bX_n^\top \bB_n, \frac{m}{n}\frac{1}{m}\bX_n\bX_n^\top (\bI_n - \bB_n))) \\
&\rightarrow \frac{1}{c} \state (P(c\p\b, c\p(\unit - \b))).
\end{align*}
It follows that when $c = 1$, the two independent matrices $(\frac{1}{n}\bX_n^\top \bB_n \bX_n, \frac{1}{n}\bX_n^\top (\bI_n - \bB_n) \bX_n$) converge jointly to $(\p\b, \p(\unit - \b))$ which are freely independent standard \fpois{} variables with rates $p$ and $(1 - p)$, respectively. We summarize this in the following proposition.

\begin{proposition}[Asymptotic version of Corollary~\ref{cor:cochran}]\label{prop:asymptotic_cochran_forward}
	Suppose $m/n \rightarrow 1$, $\bX_n \sim \MN_{n, m}(\bzero, \bI)$, and $\bB_n$ is symmetric and idempotent with $\frac{\rank(\bB_n)}{n} \rightarrow p \in (0, 1)$. Then the two matrices $(\frac{1}{n}\bX_n^\top \bB_n \bX_n, \frac{1}{n}\bX_n^\top (\bI_n - \bB_n) \bX_n$) converge jointly to two freely independent standard \fpois{} variables with rates $p$ and $(1 - p)$, respectively.
\end{proposition}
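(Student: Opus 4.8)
The plan is to combine the (cited) asymptotic freeness of a sample covariance matrix with an idempotent, specialised to $c = 1$, with the free Poisson thinning result of Proposition~\ref{prop:thinning_free}(a). First I would record that, since $m/n \to 1$, the matrices $\frac{1}{n}\bX_n\bX_n^\top = \frac{m}{n}\cdot\frac{1}{m}\bX_n\bX_n^\top$ and $\frac{1}{m}\bX_n\bX_n^\top$ have the same limit as members of the non-commutative probability space $(\mathcal{M}_n(L^{\infty, -}(\Omega, \P)), \E\frac{1}{n}\tr)$; by the quoted result this common limit is a standard free Poisson $\p$ (rate $1$, jump size $1$, since $c = 1$), which is moreover jointly distributed with the limit $\b$ of $\bB_n$, a free Bernoulli with parameter $p$, and $\p, \b$ are freely independent.

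The core of the argument is the trace-cyclicity reduction already sketched in the display preceding the statement. Any word in $\frac{1}{n}\bX_n^\top \bB_n \bX_n$ and $\frac{1}{n}\bX_n^\top(\bI_n - \bB_n)\bX_n$ is an $m \times m$ matrix of the form $\frac{1}{n^k}\bX_n^\top \mathbf{D}_1 \bX_n \bX_n^\top \mathbf{D}_2 \bX_n \cdots \bX_n^\top \mathbf{D}_k \bX_n$ with each $\mathbf{D}_i \in \{\bB_n, \bI_n - \bB_n\}$. Applying $\frac{1}{m}\tr$ and cycling the trailing $\bX_n$ to the front turns this into $\frac{n}{m}\cdot \frac{1}{n}\tr\bigl(\prod_{i=1}^k (\frac{1}{n}\bX_n\bX_n^\top \mathbf{D}_i)\bigr)$, now an $n \times n$ trace. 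Writing $\frac{1}{n}\bX_n\bX_n^\top \mathbf{D}_i = \bigl(\frac{m}{n}\cdot\frac{1}{m}\bX_n\bX_n^\top\bigr)\mathbf{D}_i$ and using the joint convergence of $\frac{1}{m}\bX_n\bX_n^\top$ and $\bB_n$ together with the prefactor $\frac{n}{m}\to 1$, this converges to $\state$ of the corresponding word in $\p\b$ and $\p(\unit - \b)$. Extending linearly over all polynomials in two variables, I conclude that $(\frac{1}{n}\bX_n^\top \bB_n \bX_n, \frac{1}{n}\bX_n^\top(\bI_n - \bB_n)\bX_n)$ converges jointly to $(\p\b, \p(\unit - \b))$.

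It then remains to invoke Proposition~\ref{prop:thinning_free}(a) with $\lambda = 1$: since $\p$ is free Poisson with rate $1$ and $\b$ is free Bernoulli with parameter $p$, the pair $\p\b, \p(\unit - \b)$ is freely independent, with $\p\b$ a free Poisson of rate $p$ and $\p(\unit - \b)$ a free Poisson of rate $1 - p$ (both with jump size $1$, i.e. standard), which is exactly the claimed limit. The only genuinely non-trivial input is the asymptotic freeness of $\frac{1}{m}\bX_n\bX_n^\top$ and $\bB_n$ — the ``well-known'' fact flagged in the footnote; granting that (as the surrounding text does), the remaining work is just the bookkeeping of the normalisation constants under trace cyclicity and a direct appeal to free Poisson thinning, so I do not expect any substantive obstacle beyond that input.
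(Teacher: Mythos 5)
Your proposal is correct and follows essentially the same route as the paper: the (cited) asymptotic freeness of $\frac{1}{m}\bX_n\bX_n^\top$ and $\bB_n$, the trace-cyclicity computation with the $\frac{n}{m}\to 1$ normalisation to get joint convergence of the two quadratic forms to $(\p\b, \p(\unit-\b))$, and then Proposition~\ref{prop:thinning_free}(a) with $\lambda = 1$ to identify the limit as two freely independent standard free Poisson variables of rates $p$ and $1-p$. Your monomial-by-monomial handling of the cyclic trace is slightly more explicit than the paper's display, but it is the same argument.
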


\begin{remark}\label{rem:thinning_justification}
    Proposition~\ref{prop:asymptotic_cochran_forward} gives another justification for the name \fpois{} thinning. The projection matrix $\bB_n$ essentially thins/reduces the degrees-of-freedom/rank of the quadratic form $\bX_n^\top \bX_n$. The rate parameters $1$, $p$ and $1 - p$ of the \fpois{} variables $\p$, $\p\b$ and $\p(\unit - \b)$ are the (asymptotic) scaled ranks of the quadratic forms $\frac{1}{n}\bX_n^\top \bX_n$, $\frac{1}{n}\bX_n^\top \bB_n \bX_n$ and $\frac{1}{n}\bX_n^\top (\bI_n - \bB_n) \bX_n$, respectively. The \fbern{} variable $\b$ may thus be thought of as thinning/reducing the ``scaled rank'' (i.e. rate) of the \fpois{} variable $\p$.   
\end{remark}

In fact, we do not need $\bB_n$ to be idempotent, all we need is that $\E \frac{1}{n}\tr\bB_n^k \rightarrow p \in (0, 1)$ for all $k \ge 1$. We also do not need $\bX_n$ to be a Gaussian random matrix---asymptotic freeness of $\frac{1}{m}\bX_n\bX_n^\top$ and $\bB_n$ holds under much more general assumptions on the entries of $\bX_n$, provided one has more control on the entries of $\bB_n$. For example, consider the following two assumptions\footnote{See the proof of Theorem 5.4.5 of \cite{anderson2010introduction} for an illustration of how these assumptions help in showing the asymptotic freeness of Wigner and deterministic matrices. Similar arguments can be used to show the asymptotic freeness of $\frac{1}{m}\bX_n \bX_n^\top$ and $\bB_n$ under these assumptions.}:
\begin{enumerate}
	\item[(A1)] The entries of $\bX_n$ are independent zero mean unit variance random variables with
	\[
	\sup_{n \ge 1} \sup_{\substack{1 \le i \le n \\ 1 \le j \le m}} \E |\bX_n(i, j)|^k < \infty
	\]
	for all $k \ge 1$.
	\vskip5pt
	\item[(A2)] The matrices $\bB_n$, $n \ge 1$, are such that
	\[
	\sup_{k \ge 1} \sup_{n \ge 1} \frac{1}{n} \max \{ (\tr(|\bB_n|^k))^{\frac{1}{k}},  (\tr(|\bI - \bB_n|^k))^{\frac{1}{k}}\} < \infty,
	\]
	where $|\bM| := (( |\bM(i, j)| ))$.
\end{enumerate}
Then the following generalization of Proposition~\ref{prop:asymptotic_cochran_forward} holds.

\begin{proposition}\label{prop:asymptotic_cochran_forward_gen}
	Suppose $(\bX_n)_{n \ge 1}$ is a sequence of $n \times m$ random matrices whose entries satisfy Assumption (A1), $\frac{m}{n} \rightarrow 1$, and $(\bB_n)_{n \ge 1}$ is a sequence of symmetric deterministic matrices satisfying Assumption (A2) such that $\E \frac{1}{n} \tr\bB_n^k \rightarrow p \in (0, 1)$ for all $k \ge 1$. Then the two matrices $(\frac{1}{n}\bX_n^\top \bB_n \bX_n, \frac{1}{n}\bX_n^\top (\bI_n - \bB_n) \bX_n$) converge jointly to two freely independent standard \fpois{} variables with rates $p$ and $(1 - p)$, respectively.
\end{proposition}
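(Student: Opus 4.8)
The proof separates cleanly into one genuinely analytic input and two essentially bookkeeping steps. The analytic input is the following lemma, which is exactly the ``well-known'' fact used (with $c=1$) in Section~\ref{sec:cochran}, now under the weak hypotheses (A1)--(A2): \emph{if $\frac mn\to 1$, $(\bX_n)$ satisfies (A1), $(\bB_n)$ satisfies (A2), and $\E\frac1n\tr\bB_n^k\to p$ for all $k\ge 1$, then the pair $(\frac1m\bX_n\bX_n^\top,\bB_n)$, regarded in $(\mathcal{M}_n(L^{\infty,-}(\Omega,\P)),\E\frac1n\tr)$, converges jointly to a pair $(\p,\b)$ in some non-commutative probability space, where $\p$ is a standard free Poisson of rate $1$, $\b$ is free Bernoulli with parameter $p$, and $\p,\b$ are freely independent.} Granting the lemma, I would run the algebraic reduction of Section~\ref{sec:cochran} and then invoke Proposition~\ref{prop:thinning_free}(a).

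\textbf{The reduction.} Fix a monomial $X_{i_1}\cdots X_{i_k}$ in two non-commuting indeterminates and let $\bC_1,\ldots,\bC_k\in\{\bB_n,\bI_n-\bB_n\}$ be the matrices it selects. Using cyclicity of the trace together with the elementary identity $(\bX_n^\top\bC\bX_n)(\bX_n^\top\bC'\bX_n)=\bX_n^\top\bC(\bX_n\bX_n^\top)\bC'\bX_n$, one telescopes to obtain, with $\bY_n:=\frac1m\bX_n\bX_n^\top$,
\[
\tfrac1m\tr\Big(\big(\tfrac1n\bX_n^\top\bC_1\bX_n\big)\cdots\big(\tfrac1n\bX_n^\top\bC_k\bX_n\big)\Big)=\big(\tfrac mn\big)^{k-1}\cdot\tfrac1n\tr\big(\bY_n\bC_1\bY_n\bC_2\cdots\bY_n\bC_k\big).
\]
Since $\frac mn\to 1$, the prefactor tends to $1$; and by the lemma $\frac1n\tr(\bY_n\bC_1\cdots\bY_n\bC_k)\to\state(\p\,\b_{i_1}\p\,\b_{i_2}\cdots\p\,\b_{i_k})=\state\big((\p\b_{i_1})\cdots(\p\b_{i_k})\big)$, where $\b_{i_j}\in\{\b,\unit-\b\}$. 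Extending linearly over an arbitrary two-variable polynomial $P$ shows that $\big(\frac1n\bX_n^\top\bB_n\bX_n,\ \frac1n\bX_n^\top(\bI_n-\bB_n)\bX_n\big)$ converges jointly to $(\p\b,\ \p(\unit-\b))$. Now Proposition~\ref{prop:thinning_free}(a), applied with $\lambda=1$ (and jump size $1$), says precisely that $\p\b$ and $\p(\unit-\b)$ are freely independent standard free Poisson variables with rates $p$ and $1-p$, which is the assertion of the proposition.

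\textbf{The lemma, and where the difficulty lies.} This is the only step needing real work, and it is the main obstacle; the references in the footnotes establish it only for square or for complex-Gaussian $\bX_n$. I would prove it by the moment method, adapting the Wigner--deterministic argument in the proof of Theorem~5.4.5 of \cite{anderson2010introduction} to the Wishart setting. The marginal convergences are standard: $\E\frac1n\tr\bB_n^k\to p$ is a hypothesis, and $\E\frac1n\tr\bY_n^k$ converges to the $k$-th moment of the standard free Poisson by the Mar\u{c}enko--Pastur theorem under (A1) (with $\frac mn\to 1$). It therefore suffices to show that $\E\frac1n\tr$ of an alternating product $\mathring\bY_n^{a_1}\mathring\bB_n^{b_1}\mathring\bY_n^{a_2}\mathring\bB_n^{b_2}\cdots$ tends to $0$, where $\mathring\bY_n:=\bY_n-(\E\frac1n\tr\bY_n)\bI_n$, $\mathring\bB_n:=\bB_n-(\frac1n\tr\bB_n)\bI_n$, and all exponents are $\ge 1$. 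Expanding each factor of $\bY_n=\frac1m\bX_n\bX_n^\top$ turns this normalized trace into a sum over tuples of row- and column-indices; grouping the tuples by the partition they induce on the $\bX_n$-entries and using (A1) to discard every contribution except those from pair partitions (higher coincidences being $o(1)$ after normalization, exactly as in the Wigner case), one is left with a sum over pairings whose deterministic weights are bounded, uniformly in $n$, by the $\ell^k$-type norms appearing in (A2); the crossing pairings contribute $o(1)$ and the surviving non-crossing ones reproduce the moments of the free convolution of a standard free Poisson with a free Bernoulli$(p)$, forcing the centered alternating traces to vanish. (Alternatively, one may first treat Gaussian $\bX_n$ via Wick's formula, or simply cite that case, and then pass to entries satisfying (A1) by a Lindeberg-type replacement, with (A2) controlling the error terms.) The combinatorial estimate on non-pair and on crossing contributions is the crux; everything else above is routine.
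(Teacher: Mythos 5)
Your proposal follows essentially the same route as the paper: the paper likewise reduces the statement to the asymptotic freeness of $\frac{1}{m}\bX_n\bX_n^\top$ and $\bB_n$ under (A1)--(A2) (asserted by adapting the argument of Theorem~5.4.5 of \cite{anderson2010introduction}, exactly as you sketch), then applies the trace-cyclicity computation of Section~\ref{sec:cochran} with $c=1$ and concludes via Proposition~\ref{prop:thinning_free}(a). Your write-up is correct (modulo the harmless omission of the expectation $\E$ in the normalized trace, which the ambient state $\E\frac{1}{n}\tr$ supplies), and it in fact spells out the moment-method details of the freeness lemma more explicitly than the paper does.
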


\section{Characterization results}\label{sec:characterization}
\subsection{Classical probability}\label{sec:conv_classical}
We first tackle classical Poisson thinning.
\begin{theorem}\label{thm:conv_classical}
	Suppose that $N$ is a non-negative integer-valued random variable defined on some probability space $(\Omega, \mathcal{F}, \P)$ with $\P(N = 0) < 1$, and, on the same space, $X_1, X_2, \ldots$ is a sequence of i.i.d. random variables with finite mean, independent of $N$. Suppose that $S_N:=\sum_{i=1}^N X_i$ is independent of $N - S_N$. 
	\begin{enumerate}
		\item[(a)] If $X_1 \sim \mathrm{Bernoulli}(p)$ for some $p \in (0, 1)$, then $N \sim \mathrm{Poisson}(\lambda)$ for some $\lambda > 0$.
		\item[(b)] If $N \sim \mathrm{Poisson}(\lambda)$ for some $\lambda > 0$, then $X_1 \sim \mathrm{Bernoulli}(p)$ for some $p \in (0, 1)$.
	\end{enumerate}
\end{theorem}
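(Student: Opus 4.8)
The plan is to convert the independence hypothesis, in each part, into a functional equation via transform identities, and then to solve that equation.

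For (a): since $X_1\in\{0,1\}$, I would first condition on $N$ to get, for $s,t\in[0,1]$,
\[
\E[s^{S_N}t^{N-S_N}]=\E\big[(ps+(1-p)t)^N\big]=G_N\big(ps+(1-p)t\big),
\]
where $G_N(z)=\E z^N$ is the probability generating function of $N$ — finite and continuous on $[-1,1]$ and real-analytic on its disc of convergence (radius $\ge 1$), hence smooth and strictly positive on $(0,1)$. Setting $t=1$, then $s=1$, recovers the marginal pgf's of $S_N$ and of $N-S_N$, so independence of $S_N$ and $N-S_N$ is \emph{equivalent} to
\[
G_N\big(ps+(1-p)t\big)=G_N\big(ps+1-p\big)\,G_N\big(p+(1-p)t\big),\qquad s,t\in[0,1].
\]
I would then differentiate this in $s$ and divide by the identity itself, which shows $(\log G_N)'\big(ps+(1-p)t\big)$ does not depend on $t$; since the arguments $ps+(1-p)t$ sweep out all of $(0,1)$ as $s,t$ vary, $(\log G_N)'$ equals a constant $c$ there, so $G_N(z)=e^{cz+d}$ on $(0,1)$ and, by real-analyticity, on the whole disc of convergence. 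The normalisation $G_N(1)=1$ then forces $G_N(z)=e^{c(z-1)}$, i.e. $\P(N=n)=e^{-c}c^n/n!$; nonnegativity of these probabilities forces the (necessarily real) $c$ to satisfy $c\ge 0$, and $c=0$ is excluded by $\P(N=0)<1$, giving $N\sim\mathrm{Poisson}(\lambda)$ with $\lambda=c>0$.

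For (b): I would use characteristic functions. Write $\phi(u)=\E[e^{iuX_1}]$, which is $C^1$ with $\phi'(0)=i\mu$, $\mu:=\E X_1$, since $\E|X_1|<\infty$. Using $\xi S_N+\eta(N-S_N)=\sum_{j\le N}[(\xi-\eta)X_j+\eta]$ and $G_N(z)=e^{\lambda(z-1)}$, the same conditioning gives
\[
\E\big[e^{i\xi S_N+i\eta(N-S_N)}\big]=G_N\big(e^{i\eta}\phi(\xi-\eta)\big)=\exp\!\big\{\lambda\big(e^{i\eta}\phi(\xi-\eta)-1\big)\big\}.
\]
Specialising $\eta=0$ and $\xi=0$ again gives the marginals, so independence becomes an equality of exponentials; as both sides are continuous, nonvanishing, and equal to $1$ at the origin, a standard continuity-and-connectedness argument forces the exponents to agree, yielding
\[
e^{i\eta}\phi(\xi-\eta)=\phi(\xi)+e^{i\eta}\phi(-\eta)-1.
\]
The key step is then to differentiate this in $\eta$ (valid since $\phi\in C^1$) and set $\eta=0$: it collapses to the linear ODE $\phi'(\xi)=i\phi(\xi)-i(1-\mu)$, whose unique solution with $\phi(0)=1$ is $\phi(\xi)=\mu e^{i\xi}+(1-\mu)$ — the characteristic function of $\mu\delta_1+(1-\mu)\delta_0$, with $\mu\in[0,1]$ forced since $|\phi(\pi)|=|1-2\mu|\le 1$. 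By uniqueness of characteristic functions, $X_1\sim\mathrm{Bernoulli}(\mu)$; the endpoint values $\mu\in\{0,1\}$ correspond exactly to $X_1$ being a.s.\ constant and are excluded once $X_1$ is assumed nondegenerate, leaving $p=\mu\in(0,1)$.

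The computations are short, so the main obstacle lies in the reductions: pinning down the right transform identity and checking that independence is \emph{equivalent} (not merely necessary or sufficient) to the displayed functional equation; justifying the passage from equality of exponentials to equality of exponents, and in (a) from an identity on a subinterval to one on the whole disc of convergence (this is where real-analyticity of the pgf is used); and the bookkeeping around the degenerate cases $\lambda=0$ and $\mu\in\{0,1\}$. The one genuinely clever move is recognising that differentiating the two-variable functional equation in $\eta$ and specialising $\eta=0$ produces a first-order linear ODE for $\phi$.
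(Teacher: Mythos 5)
Your proposal is correct and follows essentially the same route as the paper: part (a) via the PGF identity $\E[s^{S_N}t^{N-S_N}]=G_N(ps+(1-p)t)$ and a Cauchy-type functional equation solved by differentiating the logarithm, and part (b) via the joint characteristic function, the continuity-and-connectedness argument to equate exponents, and differentiation to pin down $\phi(\xi)=\mu e^{\iota\xi}+(1-\mu)$. The only (harmless) variations are cosmetic: you differentiate in $\eta$ to get a first-order ODE where the paper differentiates in the other variable to get $\psi'(t)=\iota p e^{\iota t}$ directly, and you deduce $\mu\in[0,1]$ from $|\phi(\pi)|\le 1$ rather than from the paper's second-moment argument.
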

\begin{proof} 
	(a) Let $G(z)=\E(z^N)$ be the probability generating function (PGF) of $N$. Then $G$ is continuous on $[0, 1]$ and analytic on $(0, 1)$. The joint PGF of $S_N$ and $N - S_N$ is given by 
	\[
	H(z_1, z_2) := \E(z_1^{S_N} z_2^{N - S_N}) = G(p z_1 + (1 - p) z_2),
	\]
	which exists for $z_1, z_2 \in [0, 1]$. Now, since $N$ and $N - S_N$ are independent, we must have that
	\[
	H(z_1, z_2) = H(z_1, 1) H(1, z_2).
	\]
	Therefore $G$ must satisfy the following functional equation:
	\[
	G(p z_1 + (1 - p) z_2) = G(p z_1 + (1 - p)) G(p + (1 - p) z_2),
	\]
	where $z_1, z_2 \in [0, 1]$. Setting $g(z) = \log G(z + 1)$ for $z \in [-1, 0]$, this becomes Cauchy's functional equation:
	\[
	g(s_1 + s_2) = g(s_1) + g(s_2),
	\]
	where $s_1 = pz_1 + (1 - p) - 1, s_2 = p + (1 - p)z_2 - 1$ and $s_1, s_2, s_1 + s_2 \in [-1, 0]$. Now $g$ is differentiable on $(-1, 0)$. Taking derivatives with respect to $s_1$, we get that
	\[
	g'(s_1 + s_2) = g'(s_1)
	\]
	for all $s_1, s_2 \in (-1, 0)$ such that $s_1 + s_2 \in (-1, 0)$. Thus $g'$ is constant on $(-1, 0)$, say $\lambda$. Hence, for some constant $c$,
	\[
	g(s) = \lambda s + c
	\]
	for all $s \in (-1, 0)$. As $g$ is continuous at $0$ from the left and $g(0-) = 0$, $c$ must be $0$. Therefore 
	\[
	G(z)=e^{\lambda(z - 1)}
	\]
	for all $z \in (0, 1)$. By continuity at $0$ from the right, 
	\[
	e^{-\lambda} = G(0+) = \P(N = 0) < 1,
	\]
	which implies that $\lambda > 0$. We have thus shown that $G(z) = e^{\lambda(z - 1)}$ on $[0, 1]$ for some $\lambda > 0$, which is equivalent to saying that $N$ follows a Poisson distribution with mean $\lambda.$
	
	(b) Let $\psi(t) = \E e^{\iota t X_1}, t \in \R,$ be the characteristic function (CF) of $X_1$, where $\iota$ denotes the imaginary unit. The joint CF of $S_N$ and $N - S_N$ is given by
	\begin{align*}
	\eta(t_1, t_2) &:= \E e^{\iota (t_1 S_N + t_2 (N - S_N))} \\
	&= \E(e^{\iota t_2 N} \E( e^{\iota (t_1 - t_2) S_N} \mid N)) \\
	&= \E e^{\iota t_2 N} \psi(t_1 - t_2)^N \\
	&= e^{\lambda(e^{\iota t_2}  \psi(t_1 - t_2) - 1)}.
	\end{align*}
	As $S_N$ and $N-S_N$ are independent, we must have
	\[
	\eta(t_1, t_2) = \eta(t_1, 0) \eta(0, t_2)
	\]
	for all $(t_1, t_2)$. We can rewrite this as
	\[
	e^{\lambda(e^{\iota t_2} \psi(t_1 - t_2) + 1 - \psi(t_1) - e^{\iota t_2} \psi(-t_2))} = 1.
	\]
	This means that 
	\[
	I(t_1, t_2) := e^{\iota t_2} \psi(t_1 - t_2) + 1 - \psi(t_1) - e^{\iota t_2} \psi(-t_2) \in \bigg\{ \frac{2\pi n}{\lambda} \mid n \in \Z\bigg\}.
	\]
	Since $I$ defines a continuous function, its range is connected and so $I$ must be a constant. As $I(0, 0) = 0$, we conclude that $I(t_1, t_2) = 0$ for all $t_1, t_2 \in \R$. The assumption that $X_1$ has a finite mean implies that $\psi(t)$ is continuously differentiable at every $t$. Therefore $I$ is differentiable with respect to $t_1$ for any fixed $t_2$. Hence, for all $t_2$,
	\begin{equation}\label{eq:diff_eq_bern}
	0 = \frac{\partial I}{\partial t_1} (0, t_2) = e^{\iota t_2} \psi'(- t_2) - \psi'(0).
	\end{equation}
	Now letting $p = \E X_1$, we have $\psi'(0) = \iota p$. We rewrite \eqref{eq:diff_eq_bern} as
	\[
	\psi'(t) = \iota p e^{\iota t}, t \in \R.
	\]
	Hence, for some constant $d$,
	\[
	\psi(t) = p e^{\iota t} + d.
	\]
	As $\psi(0) = 1$, we must have $d = 1 - p$ so that
	\begin{equation}\label{eq:charfn_bern}
	\psi(t) = p e^{\iota t} + (1 - p).
	\end{equation}
	We now argue that $p \in [0, 1]$. Since $\psi$ as given in \eqref{eq:diff_eq_bern} is infinitely differentiable, all moments of $X_1$ exist. In particular,
	\[
	p = \frac{\psi''(0)}{\iota^2} = \E X_1^2 \ge 0.
	\]
	Now the CF of $1 - X_1$ equals
	\[
	e^{it} \psi(-t) = p + (1 - p) e^{\iota t}.
	\]
	Therefore, arguing as before,
	\[
	1 - p = \E(1 - X_1)^2 \ge 0.
	\]
	All in all, we have shown that the CF of $X_1$ is given by
	\[
	\psi(t) = p e^{\iota t} + (1 - p)
	\]
	for some $p \in [0, 1]$. It follows that $X_1 \sim \mathrm{Bernoulli}(p)$.
\end{proof}

Just as the independence of $S_N$ and $N - S_N$ ties the Poisson and Bernoulli distributions together, so does the ``Poisson-ness'' of $S_N$.
\begin{proposition}\label{prop:conv_classical}
	Suppose that $N$ is a non-negative integer-valued random variable defined on some probability space $(\Omega, \mathcal{F}, \P)$ with $\P(N = 0) < 1$, and, on the same space, $X_1, X_2, \ldots$ is a sequence of i.i.d. random variables, independent of $N$. Suppose that $S_N:=\sum_{i=1}^N X_i$ is Poisson distributed. 
	\begin{enumerate}
		\item[(a)] If $X_1 \sim \mathrm{Bernoulli}(p)$ for some $p \in (0, 1)$, then $N \sim \mathrm{Poisson}(\lambda)$ for some $\lambda > 0$.
		\item[(b)] If $N \sim \mathrm{Poisson}(\lambda)$ for some $\lambda > 0$, then $X_1 \sim \mathrm{Bernoulli}(p)$ for some $p \in (0, 1)$.
	\end{enumerate}
	\begin{proof}
		(a) Because of the Wald identity $\E S_N = \E N \E X_1$, $\E N < \infty$. Let $\E N = \lambda$, and $G(z) = \E z^N$ be the PGF of $N$. Then
		\[
		G(p z + (1 - p)) = \E z^{S_N} = e^{\lambda p (z - 1)}.
		\]
		Letting $u = p z + (1 - p) \in [1 - p, 1]$, we have
		\[
		G(u) = e^{\lambda(u - 1)}
		\]
		for all $u \in [1 - p, 1]$. As $p \in (0, 1)$, the analyticity of $G(z)$ implies that its coefficients must match those of $e^{\lambda(z - 1)}$, which means that $N \sim \mathrm{Poisson}(\lambda)$.
		
		(b) Let $\E S_N = \mu$ and $\psi(t) = \E e^{\iota tX_1}$ be the CF of $X_1$. Now 
		\[
		\E e^{\iota tS_N} = \E(\E(e^{\iota tS_N} \mid N)) = \E \psi(t)^N = e^{\lambda(\psi(t) - 1)}.
		\]
		Thus we have
		\[
		e^{\lambda(\psi(t) - 1)} = \E e^{\iota tS_N} = e^{\mu(e^{\iota t} - 1)},
		\]
		i.e. $e^{J(t)} = 1$ where $J(t) = \lambda(\psi(t) - 1) - \mu (e^{\iota t} - 1)$. As $J(t)$ is continuous, and $J(0) = 0$, the same argument as in the proof of part (b) of Theorem~\ref{thm:conv_classical} implies that $J(t) = 0$ for all $t \in \R$. This means that $\psi(t) = 1 - p + p e^{\iota t}$, where $p = \frac{\lambda}{\mu} > 0$. That $p \le 1$ follows as before by noting that the CF of $(1 - X_1)$ must be $p + (1 - p) e^{\iota t}$.
	\end{proof}
\end{proposition}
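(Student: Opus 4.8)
The plan is to rerun the generating-function method of Theorem~\ref{thm:conv_classical}, but now exploiting the stronger hypothesis that $S_N$ is \emph{itself} Poisson (rather than merely independent of $N - S_N$); this will produce an explicit functional equation that pins down $N$ in part (a) and $X_1$ in part (b). The structural fact used throughout is that the transform of a compound sum factors: if $\phi$ denotes either the probability generating function of $X_1$ (when $X_1 \ge 0$) or its characteristic function, then the corresponding transform of $S_N = \sum_{i=1}^N X_i$ is $\E[\phi(\cdot)^N] = G(\phi(\cdot))$, where $G$ is the PGF of $N$. Equating this with the known transform of a Poisson law gives the equation to be solved.

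For part (a): since $X_1 \sim \mathrm{Bernoulli}(p)$ we have $\E z^{X_1} = pz + (1-p)$, so, writing $\mu$ for the (finite) mean of the Poisson variable $S_N$,
\[
G(pz + (1-p)) = \E z^{S_N} = e^{\mu(z - 1)}, \qquad z \in [0,1].
\]
I would then substitute $u = pz + (1-p)$, which sweeps out $[1-p, 1]$ as $z$ ranges over $[0,1]$; using $z - 1 = (u-1)/p$ this reads $G(u) = e^{(\mu/p)(u-1)}$ on $[1-p,1]$. Since $p \in (0,1)$, this is an interval of positive length lying (except for its endpoint $1$) inside the open unit disc, on which $G$ is analytic, so the identity theorem gives $G(u) = e^{\lambda(u-1)}$ with $\lambda := \mu/p > 0$ throughout the disc; matching power-series coefficients yields $N \sim \mathrm{Poisson}(\lambda)$. (One could instead first note $\E N = \mu/p < \infty$ via Wald's identity, but this is not needed.)

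For part (b): let $\psi(t) = \E e^{\iota t X_1}$ and let $\mu$ be the mean of $S_N$. Conditioning on $N$ and using that $N \sim \mathrm{Poisson}(\lambda)$,
\[
\E e^{\iota t S_N} = \E[\psi(t)^N] = e^{\lambda(\psi(t) - 1)},
\]
which must equal $e^{\mu(e^{\iota t} - 1)}$. Hence $e^{J(t)} = 1$ where $J(t) := \lambda(\psi(t) - 1) - \mu(e^{\iota t} - 1)$; as $J$ is continuous with $J(0) = 0$ and takes values in the discrete set $2\pi\iota\Z$, it vanishes identically. Solving gives $\psi(t) = (1-p) + p e^{\iota t}$ with $p := \mu/\lambda > 0$. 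Finally, since $\psi$ is now $C^\infty$, all moments of $X_1$ exist and $\E X_1^2 = \psi''(0)/\iota^2 = p \ge 0$; applying the same reasoning to the characteristic function $e^{\iota t}\psi(-t) = p + (1-p)e^{\iota t}$ of $1 - X_1$ gives $\E(1 - X_1)^2 = 1 - p \ge 0$, so $p \in [0,1]$ and $X_1 \sim \mathrm{Bernoulli}(p)$.

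I do not anticipate a real obstacle here, since the argument is a direct specialisation of the technique behind Theorem~\ref{thm:conv_classical}. The two points needing a little care are: in part (a), the passage from the interval identity to the full analytic identity, which relies on the PGF being analytic on the open unit disc and on $[1-p,1]$ having an accumulation point there; and in part (b), the claim that $J$ is constant (equivalently, that a single continuous branch of $\log$ works), which follows from the connectedness of the continuous image $J(\R) \subseteq 2\pi\iota\Z$, exactly as in Theorem~\ref{thm:conv_classical}(b). One should also keep the mild non-degeneracy proviso that the Poisson parameter $\mu$ of $S_N$ is positive; otherwise $S_N \equiv 0$ and the conclusions degenerate.
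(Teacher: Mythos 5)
Your proof is correct and takes essentially the same route as the paper: the PGF substitution $u = pz + (1-p)$ followed by analytic continuation and coefficient matching in (a), and the continuity/connectedness argument forcing $J \equiv 0$ in (b), with the same endgame via the characteristic function of $1 - X_1$. The only differences are cosmetic: you bypass Wald's identity in (a) by parametrising with the mean of $S_N$ directly, and your identification $p = \mu/\lambda$ in (b) is in fact the correct ratio (the paper's $p = \lambda/\mu$ is a slip).
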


\subsection{Free probability}\label{sec:conv_free}
We now turn to \fpois{} thinning. We first prove a free analogue of Craig's theorem.
\begin{proposition}[Craig's theorem, free version]\label{prop:craig_free}
    Suppose that $\p$ is \fpois{} with rate $1$ and is freely independent of $\{\a_1, \ldots, \a_k\}$. Then $\p\a_1, \ldots, \p\a_k$ are freely independent if and only if
    \[
        \state(\a_{i_1}\ldots\a_{i_m}) = 0
    \]
    for any $i_1, \ldots, i_m \in \{1, \ldots, k\}$, $m \ge 2$, such that $i_{\ell}$'s are not all equal.

    In particular, if $\a_{j}$'s are all self-adjoint, and $\state$ is faithful, then $\a_r\a_s = \zero$ for all $r \ne s$.
\end{proposition}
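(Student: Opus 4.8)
The plan is to compute every mixed free cumulant of $\p\a_1, \ldots, \p\a_k$ in closed form, exactly as in the proof of Proposition~\ref{prop:thinning_free}(a), and then read off the equivalence from the mixed-cumulant characterisation of freeness; the final assertion will come out of faithfulness. First I would fix $m \ge 2$ and indices $i_1, \ldots, i_m \in \{1, \ldots, k\}$, and compute $\kappa_m(\p\a_{i_1}, \ldots, \p\a_{i_m})$ using \eqref{eq:cumulant_two}. This is legitimate because $\p$ is freely independent of $\{\a_1, \ldots, \a_k\}$, so the families $\{\p, \ldots, \p\}$ and $\{\a_{i_1}, \ldots, \a_{i_m}\}$ are free; applying \eqref{eq:cumulant_two} with $\a_j \mapsto \p$ and $\b_j \mapsto \a_{i_j}$, and writing $\alpha > 0$ for the jump size of $\p$ (so $\kappa_n(\p) = \alpha^n$), one has $\kappa_\pi[\p, \ldots, \p] = \prod_{V \in \pi}\alpha^{|V|} = \alpha^{m}$ for \emph{every} $\pi \in NC(m)$, since the blocks of $\pi$ partition $\{1, \ldots, m\}$. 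Hence, using that the Kreweras complementation $K$ is a bijection of $NC(m)$ and then the moment--cumulant relation,
\[
\kappa_m(\p\a_{i_1}, \ldots, \p\a_{i_m}) \;=\; \alpha^{m}\sum_{\pi \in NC(m)}\kappa_{K(\pi)}[\a_{i_1}, \ldots, \a_{i_m}] \;=\; \alpha^{m}\sum_{\sigma \in NC(m)}\kappa_{\sigma}[\a_{i_1}, \ldots, \a_{i_m}] \;=\; \alpha^{m}\,\state(\a_{i_1}\cdots\a_{i_m}).
\]

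Since $\alpha^{m} \neq 0$, this cumulant vanishes if and only if $\state(\a_{i_1}\cdots\a_{i_m}) = 0$. By Proposition~\ref{prop:freeness_equivalence}, $\p\a_1, \ldots, \p\a_k$ are free precisely when every mixed free cumulant of the subalgebras they generate vanishes; by multilinearity of free cumulants together with the formula for free cumulants with products as entries, it suffices (just as is implicitly done in the proof of Proposition~\ref{prop:thinning_free}(a)) to check this on the generators $\p\a_1, \ldots, \p\a_k$ themselves, i.e. to require $\kappa_m(\p\a_{i_1}, \ldots, \p\a_{i_m}) = 0$ for all $m \ge 2$ and all $i_1, \ldots, i_m$ that are not all equal. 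By the displayed identity this is exactly the stated condition $\state(\a_{i_1}\cdots\a_{i_m}) = 0$ for all such tuples, proving the equivalence.

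For the final assertion, suppose the $\a_j$ are self-adjoint, $\state$ is faithful, and $\p\a_1, \ldots, \p\a_k$ are free. Fix $r \neq s$ and apply the equivalence to the index tuple $(r, s, s, r)$, which is not constant: this gives $\state(\a_r\a_s\a_s\a_r) = 0$. Self-adjointness of $\a_r, \a_s$ yields $(\a_s\a_r)^{*}(\a_s\a_r) = \a_r\a_s^{2}\a_r$, so $\state\big((\a_s\a_r)^{*}(\a_s\a_r)\big) = 0$, and faithfulness forces $\a_s\a_r = \zero$; taking adjoints, $\a_r\a_s = \zero$ as well.

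The only points needing genuine care are the standard reduction, in the freeness criterion, from all elements of the generated subalgebras to the generators $\p\a_i$, and the observation that the factor $\alpha^{m}$ is nonzero and therefore harmless; everything else is a direct transcription of the computation already carried out for Proposition~\ref{prop:thinning_free}(a), plus one application of faithfulness.
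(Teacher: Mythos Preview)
Your proof is correct and follows the same route as the paper: apply \eqref{eq:cumulant_two} to reduce the mixed free cumulant to a nonzero scalar times $\state(\a_{i_1}\cdots\a_{i_m})$, invoke the vanishing-of-mixed-cumulants criterion, and then use faithfulness on the tuple $(r,s,s,r)$ (the paper uses $(s,r,r,s)$). The only cosmetic difference is that you track the jump size $\alpha$ explicitly and obtain the factor $\alpha^{m}$, whereas the paper writes $\state(\p)$ in place of that constant; either way the scalar is nonzero and irrelevant to the equivalence.
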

\begin{proof}
    By \eqref{eq:cumulant_two}, we have
    \begin{align*}
        \kappa_m(\p\a_{i_1}, \ldots, \p \a_{i_m}) &= \sum_{\pi \in NC(m)} \kappa_\pi[\p, \ldots, \p] \kappa_{K(\pi)}[\a_{i_1}, \ldots, \a_{i_m}] \\
                                                  &= \state(\p)^m \sum_{\pi \in NC(m)} \kappa_{K(\pi)} [\a_{i_1}, \ldots, \a_{i_m}] \\
                                                  &= \state(\p)^m \state(\a_{i_1}\ldots\a_{i_m}).
    \end{align*}
    Therefore mixed free cumulants of $\p\a_1, \ldots, \p\a_k$ vanish if and only if the stated requirement holds.

    If $\a_j$'s are self-adjoint, we have, for all $r \ne s$,
    \[
        \state((\a_r\a_s)^*(\a_r \a_s)) = \state(\a_s \a_r^2 \a_s) = 0.
    \]
    As $\state$ is faithful, we must have $\a_r\a_s = \zero$.
\end{proof}
\begin{theorem}\label{thm:conv_free}
	Suppose $\p$ and $\b$ are two freely independent elements in a non-commutative probability space $(\cA, \state)$ such that $\p\b$ and $\p(\unit - \b)$ are free.
	\begin{enumerate}
		\item[(a)] If $\b$ is a \fbern{} variable with parameter $p \in (0, 1)$ and $\state(\p) > 0$, then $\p$ must be \fpois{} with rate $1$ and jump size $\state(\p)$.
		\item[(b)] If $\p$ is a \fpois{} variable with rate $1$, then $\state(\b^n) = \state(\b)$ for all $n \ge 1$.
		If, further, the moments of $\b$ match the moments of some probability measure $\nu$ on $\R$, then $\b$ must be a \fbern{} variable with parameter $p = \nu(\{1\})$.  Moreover, if $(\cA, \state)$ is a $*$-probability space with a faithful state $\state$ and $\b$ is self-adjoint, then $\b$ must be a projection.
	\end{enumerate}
\end{theorem}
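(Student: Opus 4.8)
The plan is to read off constraints on $\p$ and $\b$ from the freeness of $\p\b$ and $\p(\unit-\b)$ by pushing the vanishing mixed free cumulants through the product formula \eqref{eq:cumulant_two}; throughout I write $r_n:=\kappa_n(\p)$ and $\alpha:=\state(\p)$. Part (b) will follow almost at once from the free Craig theorem, while part (a) requires setting up and solving a recursion for the $r_n$.

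For part (b), since $\p$ is free from the unital algebra generated by $\b$, which also contains $\unit-\b$, I would apply Proposition~\ref{prop:craig_free} with $k=2$, $\a_1=\b$, $\a_2=\unit-\b$. Freeness of $\p\b=\p\a_1$ and $\p(\unit-\b)=\p\a_2$ then forces $\state(\a_{i_1}\cdots\a_{i_m})=0$ for every non-constant index string; taking $m=n$ and $i_1=\cdots=i_{n-1}=1$, $i_n=2$ gives $\state(\b^{n-1}(\unit-\b))=0$, i.e. $\state(\b^n)=\state(\b^{n-1})$ for all $n\ge2$, hence $\state(\b^n)=\state(\b)$ for all $n$ by induction. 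If the moments of $\b$ match those of a probability measure $\nu$, then $\int x^n\,d\nu=\int x\,d\nu=:p$ for all $n$, so $\int(x^2-x)^2\,d\nu=p-2p+p=0$, forcing $\nu$ to be supported on $\{0,1\}$; thus $\nu=p\delta_1+(1-p)\delta_0$ with $p=\nu(\{1\})$, and $\b$ is free Bernoulli with parameter $\nu(\{1\})$. If moreover $\state$ is faithful and $\b=\b^*$, then $\unit-\b=(\unit-\b)^*$ as well, and the ``in particular'' clause of Proposition~\ref{prop:craig_free} yields $\b(\unit-\b)=\zero$, i.e. $\b^2=\b$, so $\b$ is a projection.

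For part (a) the free Craig theorem does not apply, since we do not yet know $\p$ is free Poisson, so I would argue recursively. Because free cumulants are additive over freely independent summands and $\p=\p\b+\p(\unit-\b)$ with the two summands free, $r_n=\kappa_n(\p\b)+\kappa_n(\p(\unit-\b))$ for every $n$. Expanding the right side via \eqref{eq:cumulant_two}, one checks that in $\kappa_n(\p\b)=\sum_{\pi\in NC(n)}\kappa_\pi[\p,\ldots,\p]\,\kappa_{K(\pi)}[\b,\ldots,\b]$ the unknown $r_n$ occurs only in the summand $\pi=\bone_n$, with coefficient $\kappa_{\bzero_n}[\b,\ldots,\b]=\state(\b)^n=p^n$; the summand $\pi=\bzero_n$ contributes the known quantity $\alpha^n\kappa_n(\b)$; and every other $\pi$ contributes something built only from $r_1,\ldots,r_{n-1}$ and free cumulants of $\b$ of order $\le n-1$. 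The same holds for $\kappa_n(\p(\unit-\b))$ with $p$ replaced by $1-p$. Combining, for $n\ge2$ I get a relation
\[
r_n\bigl(1-p^n-(1-p)^n\bigr)=\Phi_n(r_1,\ldots,r_{n-1}),
\]
where $\Phi_n$ is the same function whether $\p$ is our given element or an honest free Poisson of rate $1$ and jump size $\alpha$ (the free cumulants of $\b$, a free Bernoulli with fixed parameter $p$, are the same in both cases). Since $p^n+(1-p)^n<p+(1-p)=1$ for $p\in(0,1)$ and $n\ge2$, the prefactor on the left is nonzero, so the relation determines $r_n$ uniquely from $r_1,\ldots,r_{n-1}$. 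As $\kappa_1(\p)=\alpha$ and, by Proposition~\ref{prop:thinning_free}(a), the honest free Poisson of rate $1$ and jump size $\alpha$ (paired with a free Bernoulli of parameter $p$) satisfies exactly this recursion with the same initial value, induction gives $r_n=\alpha^n$ for all $n$, i.e. $\p$ is free Poisson with rate $1$ and jump size $\state(\p)$.

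The main obstacle will be the combinatorial bookkeeping in part (a): one must verify carefully that under \eqref{eq:cumulant_two} the cumulant $r_n$ appears only linearly and only through $\bone_n$ (so its total coefficient is exactly $1-p^n-(1-p)^n$), that no order-$n$ data other than $r_n$ survives on the right, and that the nonzero-prefactor step genuinely needs $p\in(0,1)$ — it does, since for $p\in\{0,1\}$ the prefactor vanishes, matching the degeneracy of thinning there noted in the remarks after Proposition~\ref{prop:thinning_free}. Part (b), by contrast, is essentially a lookup of Proposition~\ref{prop:craig_free} once one notes that $\unit-\b$ lies in the algebra generated by $\b$.
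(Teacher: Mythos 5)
Your proposal is correct and follows essentially the same route as the paper: part (b) is the paper's argument verbatim (free Craig's theorem to kill the mixed moments $\state(\b^{m_1}(\unit-\b)^{m_2})$, then the measure $\nu$ and the faithfulness step), and part (a) rests on the same ingredients — additivity of free cumulants over the free summands $\p\b$ and $\p(\unit-\b)$, isolating $\kappa_n(\p)$ in formula \eqref{eq:cumulant_two} via the $\pi=\bone_n$ term with coefficient $p^n$ (resp.\ $(1-p)^n$), and the inequality $p^n+(1-p)^n<1$ for $p\in(0,1)$. The only cosmetic differences are that you phrase the induction in (a) as uniqueness of the solution of a recursion compared against an honest rate-$1$ free Poisson, where the paper computes the lower-order terms explicitly under the induction hypothesis, and in the projection claim you invoke the ``in particular'' clause of Proposition~\ref{prop:craig_free} instead of checking $\state((\b-\b^2)^2)=0$ directly.
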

\begin{proof}
	(a) We will use induction to show that $\kappa_n(\p) = (\kappa_1(\p))^n = (\state(\p))^n$ for all $n \ge 1$. Normalizing $\p$ by $\state(\p)$ if necessary, we may assume that $\kappa_1(\p) = \state(\p) = 1$. Then we need to prove that $\kappa_n(\p) = 1$ for all $n \ge 1$. This is trivially true for $n = 1$. Assuming that it holds for all $n \le N$, consider the case $n = N + 1$. We have, using \eqref{eq:cumulant_two}, 
	\begin{equation}\label{eq:cum_aba}
	\kappa_{N + 1}(\p\b) = \sum_{\pi \in NC(N + 1)} \kappa_{\pi}[\p, \ldots, \p] \kappa_{K(\pi)}[\b, \ldots, \b].
	\end{equation}
	If $\pi \ne \unit$, the blocks of $\pi$ will all have size $\le N$. This, together with the induction hypothesis, implies that $\kappa_{\pi}[\p, \ldots, \p] = 1$ for all $\pi \ne \unit$. Therefore
	\begin{align*}
	\text{the RHS of \eqref{eq:cum_aba}} & = p^{N + 1} \kappa_{N + 1}(\p) + \sum_{\substack{\pi \in NC(N + 1) \\ \pi \ne \bone}} \kappa_{K(\pi)}[\b, \ldots, \b] \\
	& = - p^{N + 1} (1 - \kappa_{N + 1}(\p)) + \sum_{\pi \in NC(N + 1)} \kappa_{K(\pi)}[\b, \ldots, \b] \\
	& = - p^{N + 1} (1 - \kappa_{N + 1}(\p)) + \state(\b^{N + 1}) \\
	& = p - p^{N + 1} (1 - \kappa_{N + 1}(\p)).
	\end{align*}
	Similarly,
	\[
	\kappa_{N + 1}(\p(\unit - \b)) = (1 - p) - (1 -p)^{N + 1} (1 - \kappa_{N + 1}(\p)).
	\]
	Since $\p\b$ and $\p(\unit - \b)$ are free, we must have
	\begin{align*}
	\kappa_{N + 1}(\p) &= \kappa_{N + 1}(\p\b) + \kappa_{N + 1}(\p(\unit - \b)) \\
	&= 1 - (p^{N + 1} + (1 - p)^{N + 1}) (1 - \kappa_{N + 1}(\p)),
	\end{align*}
	i.e.
	\[
	(1 - p^{N + 1} - (1 - p)^{N + 1}) (1 - \kappa_{N + 1}(\p)) = 0.
	\]
	Since $p \in (0, 1)$, we have $ p^{N + 1} + (1 - p)^{N + 1} < p + (1 - p) = 1$. Thus $\kappa_{N + 1}(\p) = 1$. This completes the induction and the proof of (a).
	
    (b) The freeness of $\p\b$ and $\p(\unit - \b)$ implies, by Proposition~\ref{prop:craig_free}, that
    \begin{equation}\label{eq:free_bern_mixed_moments}
        \state(\b^{m_1}(\unit - \b)^{m_2}) = 0
    \end{equation}
    for all $m_1, m_2 \ge 1$. In particular,
    \[
        \state(\b^m(\unit - \b)) = 0
    \]
    for all $m \ge 1$. This means that $\state(\b^m) = \state(\b)$ for all $m \ge 1$. 

    If $\state(\b^m) = \int x^m \, d\nu(x)$ for all $m \ge 1$, then taking $m_1 = m_2 = 2$ in \eqref{eq:free_bern_mixed_moments} gives
    \[
        \int x^2 (1 - x)^2 \, d\nu(x) = 0,
    \]
    which means that $\nu$ must be supported on $\{0, 1\}$. Hence 
    \[
        \state(\b) = \int x \, d\nu(x) = \nu(\{1\}) \in [0, 1],
    \]
    i.e. $\b$ is \fbern{}.

	As for the last claim, since $\b$ is self-adjoint, so is $\a := \b - \b^2$. Now
	\[
	\state(\a^*\a) = \state(\a^2) = \state(\b^2) - 2 \state(\b^3) + \state(\b^4) = 0.
	\]
	As $\state$ is faithful, $\a = \zero$, i.e. $\b^2 = \b$.
\end{proof}

Let $Q_n(z) = 1 - z^n - (1 - z)^n$. Note that the proof of part (a) of Theorem~\ref{thm:conv_free} goes through as long as $Q_n(p) \ne 0$ for all $n \ge 2$. We record this observation in the proposition below. Define the set 
\[
\mathcal{Z} = \{z \in \C \mid Q_n(z) = 0 \text{ for some } n \ge 2\}.
\]
Note that $\mathcal{Z}$ is symmetric about the real axis and contains $0, 1$.
\begin{proposition}\label{prop:gen}
	Suppose $\p$ and $\b$ are two free elements in a non-commutative probability space $(\cA, \state)$. Suppose that $\p\b$ and $\p(\unit - \b)$ are free. If $\b$ is such that $\state(\b^n) = \state(\b) \notin \mathcal{Z}$ for all $n \ge 1$, then $\kappa_n(\p) = \state(\p)^n$ for all $n \ge 1$. In particular, if $\state(\p) > 0$, then $\p$ is \fpois{} with rate $1$ and jump size $\state(\p)$.
\end{proposition}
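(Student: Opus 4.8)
The plan is to repeat, with only cosmetic changes, the induction carried out in the proof of part~(a) of Theorem~\ref{thm:conv_free}, keeping track of the scalar $\state(\p)$ (which was normalised to $1$ there) and replacing the elementary real inequality $p^n+(1-p)^n<1$ used there by the standing hypothesis $p:=\state(\b)\notin\mathcal{Z}$, i.e. $Q_n(p)\ne 0$ for all $n\ge 2$.

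First I would record two bookkeeping facts. Since $\p\b+\p(\unit-\b)=\p$ and $\p\b,\p(\unit-\b)$ are free, free cumulants are additive on this pair, so $\kappa_n(\p)=\kappa_n(\p\b)+\kappa_n(\p(\unit-\b))$ for every $n\ge 1$. Also, the hypothesis $\state(\b^n)=\state(\b)=p$ for all $n$ forces $\state((\unit-\b)^n)=1-p$ for all $n$ by the binomial theorem, so $\unit-\b$ is symmetric to $\b$ with $p$ replaced by $1-p$.

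Then comes the induction on $n$, proving $\kappa_n(\p)=\state(\p)^n$. The base case $n=1$ is $\kappa_1(\p)=\state(\p)$. Assuming the claim for all $n\le N$, I would expand $\kappa_{N+1}(\p\b)$ by the product--cumulant formula~\eqref{eq:cumulant_two}. The key observation is that any $\pi\in NC(N+1)$ with $\pi\ne\bone_{N+1}$ has all blocks of size $\le N$, so by the induction hypothesis $\kappa_{\pi}[\p,\ldots,\p]=\state(\p)^{N+1}$; only $\pi=\bone_{N+1}$ contributes $\kappa_{N+1}(\p)$, and $K(\bone_{N+1})=\bzero_{N+1}$ gives $\kappa_{K(\bone_{N+1})}[\b,\ldots,\b]=p^{N+1}$. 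Using that $K$ is a bijection of $NC(N+1)$ and $\sum_{\sigma\in NC(N+1)}\kappa_{\sigma}[\b,\ldots,\b]=\state(\b^{N+1})=p$, one gets
\[
\kappa_{N+1}(\p\b)=p^{N+1}\kappa_{N+1}(\p)+\state(\p)^{N+1}\bigl(p-p^{N+1}\bigr),
\]
and, by the symmetry noted above,
\[
\kappa_{N+1}(\p(\unit-\b))=(1-p)^{N+1}\kappa_{N+1}(\p)+\state(\p)^{N+1}\bigl((1-p)-(1-p)^{N+1}\bigr).
\]
Adding these and using $\kappa_{N+1}(\p)=\kappa_{N+1}(\p\b)+\kappa_{N+1}(\p(\unit-\b))$ yields
\[
Q_{N+1}(p)\,\bigl(\kappa_{N+1}(\p)-\state(\p)^{N+1}\bigr)=0,
\]
and since $p\notin\mathcal{Z}$ we have $Q_{N+1}(p)\ne 0$, so $\kappa_{N+1}(\p)=\state(\p)^{N+1}$, completing the induction. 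For the last assertion: if $\alpha:=\state(\p)>0$, then $\kappa_n(\p)=\alpha^n=1\cdot\alpha^n$ for all $n$, which is exactly the free-cumulant sequence of a free Poisson with rate $1$ and jump size $\alpha$.

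I do not expect a genuine obstacle here: the only substantive departure from Theorem~\ref{thm:conv_free}(a) is that $p$ may now be an arbitrary complex number, so the crude positivity argument that previously showed $Q_{N+1}(p)\ne 0$ must be replaced by the explicit assumption $p\notin\mathcal{Z}$; the other point to watch is carrying the factors $\state(\p)^{N+1}$ through the computation rather than normalising them away (which also shows the statement is valid, trivially, when $\state(\p)=0$).
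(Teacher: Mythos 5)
Your proposal is correct and is exactly the argument the paper intends: Proposition~\ref{prop:gen} is stated as the observation that the induction in Theorem~\ref{thm:conv_free}(a) goes through verbatim once the positivity bound $p^{n}+(1-p)^{n}<1$ is replaced by the hypothesis $Q_n(p)\ne 0$, i.e. $\state(\b)\notin\mathcal{Z}$. Your only (welcome) refinements are carrying the factor $\state(\p)^{N+1}$ through instead of normalising, which also covers $\state(\p)=0$, and using additivity of free cumulants for the free pair $\p\b$, $\p(\unit-\b)$ exactly as in the paper.
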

\begin{figure}[!t]
	\centering
	\includegraphics[scale = 0.30]{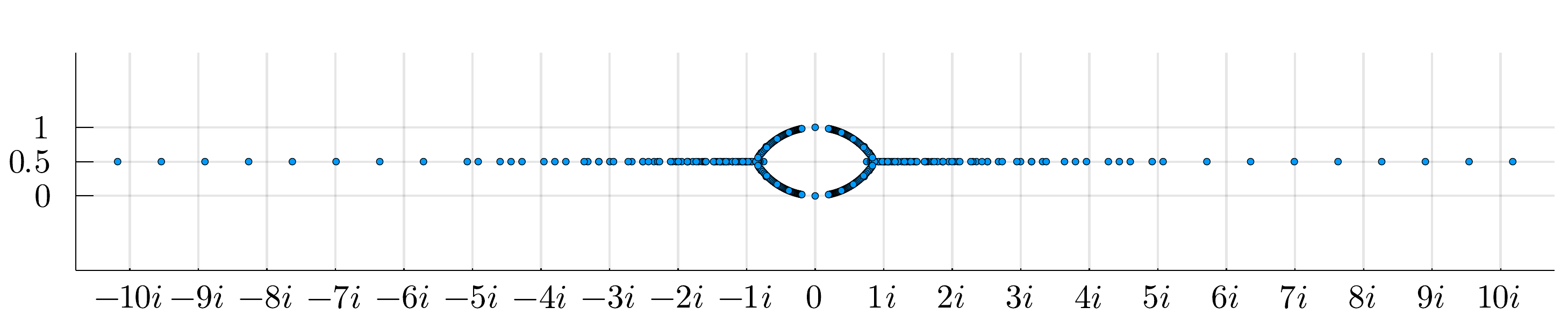}
	\caption{All zeros of $Q_n(z) = 0$ for $2 \le n \le 32$.}
	\label{fig:roots}
\end{figure}
In Figure~\ref{fig:roots}, we plot all the zeros of $Q_n(z) = 0$ for $2 \le n \le 32$. Note that there does not seem to be any real zero other than $0$ and $1$.  Also, Figure~\ref{fig:roots} suggests that $\mathcal{Z}$ intersects the line $\frac{1}{2} + \iota t, t \in \R$, infinitely often. We prove these facts in the following lemma.
\begin{lemma}\label{lem:root}
	For $n \ge 2$, the polynomial $Q_n(z) = 1 - z^n - (1 - z)^n$ has no other real roots than the trivial ones, namely $0$ and $1$. If, further, $n \equiv 0, 1\,(\mathrm{mod}\, 4)$, then $Q_n(z)$ has at least two zeros of the form $\frac{1}{2} + \iota t, t \in \R$.
\end{lemma}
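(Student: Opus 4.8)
The plan is to exploit the symmetry $z \mapsto 1-z$ of $Q_n$ by making the substitution $z = \frac12 + w$. Expanding $(\frac12+w)^n + (\frac12-w)^n$ kills all odd powers of $w$, so one obtains
\[
Q_n\bigl(\tfrac12 + w\bigr) = 1 - g(w^2), \qquad g(s) := 2\sum_{j=0}^{\lfloor n/2\rfloor} \binom{n}{2j}\, 2^{2j-n}\, s^{j},
\]
where $g$ is a polynomial in $s$ of degree $\lfloor n/2\rfloor$ with strictly positive coefficients. Two evaluations will be used throughout: $g(0) = 2^{1-n}$, and $g(\frac14) = 0^n + 1^n = 1$ (the latter coming from $z = 0$). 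This substitution reduces both assertions to elementary one-variable statements about $g$.

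For the real roots, I would note that a real $z$ corresponds to a real $w$, hence to $s = w^2 \ge 0$ with $g(s) = 1$. Since $n \ge 2$, the polynomial $g$ is non-constant with positive coefficients, so it is strictly increasing on $[0,\infty)$; as $g(0) = 2^{1-n} \le \frac12 < 1$, the equation $g(s) = 1$ has a unique solution in $[0,\infty)$, and we have already observed that $s = \frac14$ is such a solution. Hence every real root of $Q_n$ satisfies $w^2 = \frac14$, i.e. $z \in \{0,1\}$.

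For the second part I would look along the vertical line $z = \frac12 + \iota t$, i.e. set $w = \iota t$, so that $s = w^2 = -t^2 \le 0$; a zero of $Q_n$ of this form with $t \ne 0$ is exactly a solution of $g(s) = 1$ with $s < 0$ (the value $s = 0$, i.e. $z = \frac12$, is excluded since $g(0) \ne 1$). Now $g(0) = 2^{1-n} < 1$, and the crucial point is that when $n \equiv 0, 1 \pmod 4$ the degree $\lfloor n/2\rfloor$ of $g$ is even (it equals $2k$ for $n = 4k$ and for $n = 4k+1$), while the leading coefficient of $g$ is positive (it is $2$ when $n$ is even and $n$ when $n$ is odd); therefore $g(s) \to +\infty$ as $s \to -\infty$. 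By the intermediate value theorem there is some $s_0 < 0$ with $g(s_0) = 1$, and then $\frac12 \pm \iota\sqrt{-s_0}$ are two roots of $Q_n$ of the required form.

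No step here is genuinely hard. The only thing that needs care is the parity bookkeeping identifying exactly when $\lfloor n/2\rfloor$ is even — which is precisely why the hypothesis $n \equiv 0, 1 \pmod 4$ enters, since for $n \equiv 2, 3 \pmod 4$ the polynomial $g$ tends to $-\infty$ on $(-\infty, 0]$ and this intermediate value argument breaks down (so the existence of a purely imaginary-shifted root would need a different method). The expansion giving $g$, the positivity of its coefficients, and the monotonicity of $g$ on $[0,\infty)$ are all routine to verify.
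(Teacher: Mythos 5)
Your argument is correct. The substitution $z=\tfrac12+w$, $s=w^2$, giving $Q_n(\tfrac12+w)=1-g(w^2)$ with $g$ a polynomial of degree $\lfloor n/2\rfloor$ with strictly positive coefficients, is sound, and your two anchor values $g(0)=2^{1-n}$ and $g(\tfrac14)=1$ are verified correctly; the parity bookkeeping ($\lfloor n/2\rfloor$ even exactly for $n\equiv 0,1\ (\mathrm{mod}\ 4)$, leading coefficient $2$ or $n$, hence $g(s)\to+\infty$ as $s\to-\infty$) is also right. Compared with the paper: for the real-root statement the paper argues differently, using $Q_n>0$ on $(0,1)$ (already noted in the proof of Theorem~\ref{thm:conv_free}), the symmetry $Q_n(z)=Q_n(1-z)$, and a sign argument for $z>1$ that needs a separate factorisation $Q_{2m+1}(z)=(1-z)(1+z+\cdots+z^{2m}-(1-z)^{2m})$ when $n$ is odd; your monotonicity argument for $g$ on $[0,\infty)$ avoids this case split and in addition pinpoints $s=\tfrac14$ as the unique nonnegative solution, so it is a cleaner, unified treatment. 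For the critical-line statement your argument is essentially the paper's: the paper works with $R_n(t)=Q_n(\tfrac12+\iota t)$, an even-degree real polynomial with $R_n(0)>0$ whose leading coefficient is negative precisely when $n\equiv 0,1\ (\mathrm{mod}\ 4)$, and applies the intermediate value theorem; since $R_n(t)=1-g(-t^2)$, your sign analysis of $g$ at $-\infty$ is the same computation packaged through $g$, with the slight advantage that both halves of the lemma are read off from a single one-variable polynomial.
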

\begin{proof}
	As already noted in the proof of Theorem~\ref{thm:conv_free}, part (a), if $z \in (0, 1)$, then $Q_n(z) > 0$. By symmetry, it suffices to show that $Q_n(z) < 0$ for $z > 1$. This is obvious for $n$ even. So suppose that $n = 2m + 1$, where $m \ge 1$. Then
	\[
	Q_{2m + 1}(z) = (1 - z) (1 + z + \cdots + z^{2m} - (1 - z)^{2m}).
	\]
	As $z > 1$, we have $z^{2m} > (z - 1)^{2m}$. Hence $Q_{2m + 1}(z) < 0$.
	
	As for the last statement, consider
	\begin{align*}
	R_n(t) = Q_n\bigg(\frac{1}{2} + \iota t\bigg) &= 1 - \sum_{0 \le j \le n} \binom{n}{j}\frac{1}{2^j} (\iota^j + (-\iota)^j) t^{n - j} \\
	&= 1 - \sum_{\substack{0 \le j \le n \\ j \equiv 0\,(\mathrm{mod}\, 2)}} 2\binom{n}{j}\frac{1}{2^j} (-1)^{j/2} t^{n - j}.
	\end{align*}
	$R_n(t)$ is a polynomial of even degree with real coefficients and $R_n(0) = Q_n(1/2) > 0$. The desired conclusion follows because, when $n \equiv 0, 1\,(\mathrm{mod}\, 4)$, the leading coefficient of $R_n(t)$ is negative so that $\lim_{t \rightarrow \pm\infty} R_n(t) = -\infty$.
\end{proof}
Now we present an analogue of Proposition~\ref{prop:conv_classical}.
\begin{proposition}\label{prop:conv_free}
	Suppose $\p$ and $\b$ are two freely independent elements in a non-commutative probability space $(\cA, \state)$. Suppose that $\p\b$ is \fpois{} with rate $p \in (0, 1]$ and jump size $\alpha > 0$. 
	\begin{enumerate}
		\item[(a)] If $\b$ is a \fbern{} variable with parameter $p$, then $\p$ must be \fpois{} with rate $1$ and jump size $\alpha$.
		\item[(b)] If $\p$ is a \fpois{} variable with rate $1$ and jump size $\alpha$, then $\b$ must be \fbern{} with parameter $p$.
	\end{enumerate}
\end{proposition}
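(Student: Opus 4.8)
The plan is to run the same free-cumulant bookkeeping used for Proposition~\ref{prop:thinning_free} and Theorem~\ref{thm:conv_free}, the only new ingredients being the product formula \eqref{eq:cumulant_two} and the bijectivity of the Kreweras complement $K$ on $NC(n)$. First I would normalise: replacing $\p$ by $\p/\alpha$ turns $\p\b$ into $(\p\b)/\alpha$, whose $n$-th free cumulant is $\kappa_n(\p\b)/\alpha^n = p$. So without loss of generality $\alpha = 1$, i.e.\ $\kappa_n(\p\b) = p$ for all $n \ge 1$, and in part (b) we may additionally take $\kappa_n(\p) = 1$ for all $n$. Everything then reduces to proving, for (a), that $\kappa_n(\p) = 1$ for all $n$, and, for (b), that $\state(\b^n) = p$ for all $n$.

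Part (b) is essentially immediate. Since $\p$ is free Poisson with rate $1$ and jump size $1$, $\kappa_{\pi}[\p, \ldots, \p] = \prod_{V \in \pi}\kappa_{|V|}(\p) = 1$ for every $\pi \in NC(n)$, so \eqref{eq:cumulant_two} collapses to
\[
    \kappa_n(\p\b) = \sum_{\pi \in NC(n)} \kappa_{K(\pi)}[\b, \ldots, \b] = \sum_{\sigma \in NC(n)} \kappa_{\sigma}[\b, \ldots, \b] = \state(\b^n),
\]
where the middle equality uses that $K$ is a bijection of $NC(n)$ and the last is the moment--cumulant relation. As the left side equals $p$, we get $\state(\b^n) = p$ for all $n \ge 1$; since $p \in (0, 1]$, this says exactly that $\b$ is free Bernoulli with parameter $p$.

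Part (a) I would prove by induction on $n$, just as in Theorem~\ref{thm:conv_free}(a). The base case $n = 1$ follows from $\kappa_1(\p\b) = \state(\p)\state(\b) = p\,\state(\p) = p$, giving $\state(\p) = 1$. For the step at $n = N+1$, any $\pi \ne \bone_{N+1}$ has all blocks of size $\le N$, so the induction hypothesis gives $\kappa_{\pi}[\p, \ldots, \p] = 1$; the term $\pi = \bone_{N+1}$ contributes $\kappa_{N+1}(\p)\,\kappa_{K(\bone_{N+1})}[\b, \ldots, \b] = \kappa_{N+1}(\p)\,(\state(\b))^{N+1} = \kappa_{N+1}(\p)\,p^{N+1}$, using $K(\bone_{N+1}) = \bzero_{N+1}$ and $\state(\b^n) = p$. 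Writing the sum over $\pi \ne \bone_{N+1}$ as $\big(\sum_{\pi \in NC(N+1)} \kappa_{K(\pi)}[\b, \ldots, \b]\big) - p^{N+1} = \state(\b^{N+1}) - p^{N+1} = p - p^{N+1}$ (again by bijectivity of $K$ and the moment--cumulant relation), \eqref{eq:cumulant_two} becomes
\[
    p = \kappa_{N+1}(\p\b) = \kappa_{N+1}(\p)\,p^{N+1} + p - p^{N+1},
\]
and since $p \ne 0$ we conclude $\kappa_{N+1}(\p) = 1$. Undoing the normalisation, $\kappa_n(\p) = \alpha^n$ for all $n$, i.e.\ $\p$ is free Poisson with rate $1$ and jump size $\alpha$.

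I do not expect a serious obstacle: the argument is a routine adaptation of machinery already in the paper, and the only points needing care are the scaling normalisation (so the jump sizes line up) and faithfully tracking the bijection $K$ when converting sums over $NC(N+1)$ indexed by $\pi$ into sums indexed by $K(\pi)$. The hypothesis $p \in (0, 1]$ is used only to divide by $p^{N+1}$ in part (a) and to make $p$ a legitimate free Bernoulli parameter in part (b); note also that $\state(\p) > 0$ need not be assumed here since it is forced, via $\kappa_1(\p\b) = \state(\p)\state(\b)$, by $\state(\b) = p \ne 0$.
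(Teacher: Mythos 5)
Your proof is correct and follows essentially the same route as the paper: the same induction via the product formula \eqref{eq:cumulant_two}, isolating the $\pi = \bone$ term and using bijectivity of $K$ with the moment--cumulant relation for the rest, and the same one-line cumulant computation for part (b). The only cosmetic difference is that you normalise $\alpha = 1$ at the outset, whereas the paper carries $\alpha$ through the induction explicitly.
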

\begin{proof}
	(a) We have $\kappa_n(\p\b) = p \alpha^n$ for all $n \ge 1$. By freeness, $p \alpha = \kappa_1(\p\b) = \state(\b) \state(\p) = p \state(\p)$, i.e. $\state(\p) = \alpha$. We will show that $\kappa_n(\p) = \alpha^n$ for all $n \ge 1$. We have already seen that this is true when $n = 1$. Assuming its truth for $n \le N$, consider the case $n = N + 1$. We have
	\begin{align*}
	p \alpha^{N + 1} = \kappa_{N + 1}(\p\b) &= \sum_{\pi \in NC(N + 1)} \kappa_{\pi}[\p, \ldots, \p] \kappa_{K(\pi)}[\b, \ldots, \b] \\
	&= \kappa_{N + 1}(\p) p^{N + 1} + \sum_{\substack{\pi \in NC(N + 1) \\ \pi \ne \bone}} \alpha^{N + 1} \kappa_{K(\pi)}[\b, \ldots, \b] \\
	&= (\kappa_{N + 1}(\p) - \alpha^{N + 1}) p^{N + 1} + \alpha^{N + 1} \state(\b^{N + 1}) \\
	&= (\kappa_{N + 1}(\p) - \alpha^{N + 1}) p^{N + 1} + \alpha^{N + 1} p.
	\end{align*}
	As $p \ne 0$, we get that $\kappa_{N + 1}(\p) = \alpha^{N + 1}$. This completes the induction and the proof.
	
    (b) As  $\p$ has rate $1$, we have $\kappa_n(\p\b) = \alpha^n \state(\b^n)$ for all $n \ge 1$ (see the proof of Proposition~\ref{prop:thinning_free}, part (a)). Therefore $\state(\b^n) = p$ for all $n \ge 1$.
\end{proof}

\subsection{Implications for the asymptotic version of Cochran's theorem}
Together Theorem~\ref{thm:conv_free} and Proposition~\ref{prop:conv_free} imply the following result which may be thought of as a converse of Proposition~\ref{prop:asymptotic_cochran_forward_gen}.

\begin{proposition}\label{prop:asymptotic_cochran_backward_gen}
	Suppose $(\bX_n)_{n \ge 1}$ is a sequence of $n \times m$ random matrices whose entries satisfy Assumption (A1), $\frac{m}{n} \rightarrow 1$, and $(\bB_n)_{n \ge 1}$ is a sequence of symmetric deterministic matrices satisfying Assumption (A2) such that $\E\frac{1}{n}\tr \bB_n^k$ converges for all $k \ge 1$. 
	\begin{enumerate}
        \item[(a)] $\frac{1}{n} \bX_n^\top \bB_n \bX_n$ and $\frac{1}{n} \bX_n^\top (\bI_n - \bB_n) \bX_n$ are asymptotically free only if there is some constant $p \in [0, 1]$ such that
            \[
                \E \frac{1}{n}\tr (\bB_n^k) \rightarrow p \text{ for all } k \ge 1.
            \]
		\vskip5pt
		\item[(b)] $\frac{1}{n}\bX_n^\top\bB_n\bX_n$ converges to a \fpois{} variable with rate $p \in (0, 1]$ only if 
            \[
                \E \frac{1}{n}\tr (\bB_n^k) \rightarrow p \text{ for all } k \ge 1.
            \]
	\end{enumerate}
\end{proposition}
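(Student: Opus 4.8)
The plan is to combine the asymptotic-freeness input behind Proposition~\ref{prop:asymptotic_cochran_forward_gen} with the characterisation results of Theorem~\ref{thm:conv_free} and Proposition~\ref{prop:conv_free}. First I would note that the arguments establishing Proposition~\ref{prop:asymptotic_cochran_forward_gen} never used that $\lim_n \E\frac{1}{n}\tr\bB_n^k$ lies in $(0,1)$, nor that $\bB_n$ is idempotent: under Assumptions (A1) and (A2), $\frac{m}{n}\to 1$, and the mere convergence $\E\frac{1}{n}\tr\bB_n^k \to \beta_k$ for all $k \ge 1$, the matrices $\frac{1}{m}\bX_n\bX_n^\top$ and $\bB_n$ converge jointly to a pair $(\p, \b)$ in some non-commutative probability space $(\cA, \state)$, where $\p$ is a standard free Poisson variable (rate $1$, jump size $1$, because $m/n \to 1$), $\state(\b^n) = \beta_n$ for all $n \ge 1$, and $\p, \b$ are freely independent. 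Then, exactly as in the displayed computation of Section~\ref{sec:cochran} specialised to $c = 1$ (which uses only cyclicity of the trace and $m/n \to 1$), the pair $(\frac{1}{n}\bX_n^\top\bB_n\bX_n, \frac{1}{n}\bX_n^\top(\bI_n - \bB_n)\bX_n)$ converges jointly to $(\p\b, \p(\unit - \b))$.

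For part (a): asymptotic freeness of the two quadratic forms means precisely that $\p\b$ and $\p(\unit - \b)$ are free. Since $\p$ is free Poisson with rate $1$ and $\p, \b$ are free, Theorem~\ref{thm:conv_free}(b) applies and yields $\state(\b^n) = \state(\b)$ for all $n \ge 1$; translating back, $\E\frac{1}{n}\tr\bB_n^k \to \state(\b) =: c$ for every $k \ge 1$, and $c \in \R$ because each $\E\frac{1}{n}\tr\bB_n^k$ is real ($\bB_n$ being symmetric). If in addition $(\beta_k)_{k \ge 1}$ is the moment sequence of a probability measure $\nu$ on $\R$, the remaining half of Theorem~\ref{thm:conv_free}(b) identifies $\b$ as a free Bernoulli variable with parameter $\nu(\{1\})$, whence $c = \state(\b) = \nu(\{1\}) \in [0,1]$.

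For part (b): suppose $\frac{1}{n}\bX_n^\top\bB_n\bX_n$ converges to a free Poisson variable with rate $p \in (0,1]$ — necessarily of jump size $1$, to be consistent with the standard $\p$ (this is the reading under which the statement is to be understood; with a different jump size one would only get $\state(\b^n)$ constant after a rescaling). The limit being $\p\b$, Proposition~\ref{prop:conv_free}(b) applied with $\alpha = 1$ forces $\b$ to be free Bernoulli with parameter $p$; equivalently, \eqref{eq:cumulant_two} combined with $\kappa_j(\p) = 1$ gives $\kappa_n(\p\b) = \state(\b^n)$, which must equal the $n$-th free cumulant $p$ of a rate-$p$ jump-size-$1$ free Poisson. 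In either case $\E\frac{1}{n}\tr\bB_n^k \to \state(\b^k) = p$ for every $k \ge 1$.

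The one genuinely nontrivial ingredient I would take as input rather than reprove is the joint asymptotic freeness of $\frac{1}{m}\bX_n\bX_n^\top$ and $\bB_n$ under (A1)--(A2) with no restriction on $\lim_n\E\frac{1}{n}\tr\bB_n^k$ (handled by the standard combinatorial/concatenation arguments alluded to around Proposition~\ref{prop:asymptotic_cochran_forward_gen}); granting it, parts (a) and (b) follow immediately from Theorem~\ref{thm:conv_free} and Proposition~\ref{prop:conv_free}. The remaining points are routine bookkeeping: keeping track of the $\frac{1}{m}\tr$ versus $\frac{1}{n}\tr$ normalisations and the $m/n \to 1$ scaling when passing between $\bX_n^\top(\cdot)\bX_n$ and $\bX_n(\cdot)\bX_n^\top$, and, for the ``in particular'' of (a), the observation that the constant moment sequence $(c, c, \ldots)$ (with zeroth moment $1$) is realised by a probability measure on $\R$ — necessarily $(1-c)\delta_0 + c\,\delta_1$ — only when $c \in [0,1]$.
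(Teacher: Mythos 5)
Your argument is correct and takes essentially the same route the paper intends: granting the joint asymptotic freeness of $\frac{1}{m}\bX_n\bX_n^\top$ and $\bB_n$ under (A1)--(A2) with $m/n \to 1$, the limiting pair is $(\p\b, \p(\unit-\b))$ with $\p$ standard free Poisson, and parts (a) and (b) are then exactly Theorem~\ref{thm:conv_free}(b) and Proposition~\ref{prop:conv_free}(b), which is all the paper's one-sentence proof invokes. Your side remark that in (b) the limiting free Poisson must be read as standard (jump size $1$) for the stated conclusion is a correct and careful reading of the statement.
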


Together Propositions~\ref{prop:asymptotic_cochran_forward_gen} and \ref{prop:asymptotic_cochran_backward_gen} constitute the asymptotic analogues of Proposition~\ref{prop:cochran}, Corollary~\ref{cor:cochran} and Proposition~\ref{prop:craig}.

\section{Thinning into more than two components}\label{sec:more_than_two}
We have considered classical and \fpois{} thinning with Bernoulli variables. Instead of a Bernoulli, one can also use categorical variables with more than two categories (i.e. multinomial random variables with a single trial). The classical characterization results extend easily to this case.
\begin{theorem}\label{thm:conv_classical_more_than_two}
	Suppose that $N$ is a non-negative integer-valued random variable defined on some probability space $(\Omega, \mathcal{F}, \P)$ with $\P(N = 0) < 1$, and, on the same space, $X_1, X_2, \ldots$, is a sequence of i.i.d. $k$-dimensional random vectors, $X_i = (X_{i, 1}, \ldots, X_{i, k})^\top$, with finite mean, independent of $N$, such that $\sum_{\ell = 1}^k X_{i, \ell} = 1$. Let $S_N^{(\ell)} := \sum_{i = 1}^N X_{i, \ell}$. Suppose that $S_N^{(1)}, \ldots, S_N^{(k)}$ are independent. 
	\begin{enumerate}
		\item[(a)] If $X_1 \sim \mathrm{Categorical}(p_1, \ldots, p_k)$ such that $p_\ell \in (0, 1)$ for some $\ell$, then $N \sim \mathrm{Poisson}(\lambda)$ for some $\lambda > 0$.
		\item[(b)] If $N \sim \mathrm{Poisson}(\lambda)$ for some $\lambda > 0$, then $X_1 \sim \mathrm{Categorical}(p_1, \ldots, p_k)$ for some $p_1, \ldots, p_k \in [0, 1]$ adding up to $1$.
	\end{enumerate}
\end{theorem}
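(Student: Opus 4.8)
The plan is to reduce everything to the two-component case already settled in Theorem~\ref{thm:conv_classical} by merging categories, and then to cash in the constraint $\sum_{\ell=1}^k X_{i,\ell} = 1$, which gives $\sum_{\ell=1}^k S_N^{(\ell)} = N$. The common preliminary step for both parts: fix $\ell \in \{1, \ldots, k\}$; mutual independence of $S_N^{(1)}, \ldots, S_N^{(k)}$ implies that $S_N^{(\ell)}$ is independent of the vector $(S_N^{(j)})_{j \ne \ell}$, hence of its sum $\sum_{j \ne \ell} S_N^{(j)} = N - S_N^{(\ell)}$. Moreover the scalar sequence $(X_{i,\ell})_{i \ge 1}$ is i.i.d.\ (the $\ell$-th coordinates of i.i.d.\ vectors), has finite mean, and is independent of $N$, and $S_N^{(\ell)} = \sum_{i=1}^N X_{i,\ell}$. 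Thus the hypotheses of Theorem~\ref{thm:conv_classical} hold with $(X_i)$ replaced by $(X_{i,\ell})$ for each $\ell$.

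For part (a), relabel so that $p_1 \in (0,1)$. Then $X_{1,1} \sim \mathrm{Bernoulli}(p_1)$ with parameter in $(0,1)$, and Theorem~\ref{thm:conv_classical}(a) applied to the sequence $(X_{i,1})$ gives $N \sim \mathrm{Poisson}(\lambda)$ for some $\lambda > 0$. For part (b), $N \sim \mathrm{Poisson}(\lambda)$, so Theorem~\ref{thm:conv_classical}(b) applied to $(X_{i,\ell})$ gives $X_{1,\ell} \sim \mathrm{Bernoulli}(p_\ell)$ for some $p_\ell \in [0,1]$ for every $\ell$; in particular each $X_{1,\ell}$ is $\{0,1\}$-valued almost surely. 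Combined with $\sum_{\ell} X_{1,\ell} = 1$, exactly one coordinate of $X_1$ equals $1$, so $X_1$ takes values in $\{e_1, \ldots, e_k\}$ almost surely, i.e.\ $X_1 \sim \mathrm{Categorical}(p_1, \ldots, p_k)$ with $p_\ell = \P(X_1 = e_\ell) = \E X_{1,\ell} \in [0,1]$, and $\sum_{\ell} p_\ell = \E \sum_{\ell} X_{1,\ell} = 1$.

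I do not anticipate a genuine obstacle here: the entire content is the observation that grouping the remaining $k-1$ categories into a single one turns the $k$-component independence hypothesis into a family of two-component ones. The one point deserving care is the appeal to Theorem~\ref{thm:conv_classical}(b): one must use the version in which the Bernoulli parameter is allowed to lie in all of $[0,1]$ (which is precisely what its proof delivers), since for part (b) here $p_\ell = 0$ (an unused category) and $p_\ell = 1$ (a single deterministic category) both have to be permitted. If one prefers to avoid the reduction, the characteristic-function computation of Theorem~\ref{thm:conv_classical}(b) can be rerun coordinatewise: writing $\psi(t_1, \ldots, t_k) = \E e^{\iota \sum_\ell t_\ell X_{1,\ell}}$ and $\phi_\ell(s) = \psi(0, \ldots, s, \ldots, 0)$, independence of the $S_N^{(\ell)}$ together with $N \sim \mathrm{Poisson}(\lambda)$ forces $\psi(t) = \sum_{\ell} \phi_\ell(t_\ell) - (k-1)$ by the usual connectedness argument, and then the two-component analysis pins down each $\phi_\ell(s) = p_\ell e^{\iota s} + (1 - p_\ell)$, while $\psi(s, \ldots, s) = e^{\iota s}$ (from $\sum_\ell X_{1,\ell} = 1$) yields $\sum_\ell p_\ell = 1$; both routes give the same conclusion.
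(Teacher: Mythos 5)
Your proof is correct and takes essentially the same route as the paper, which likewise reduces to Theorem~\ref{thm:conv_classical} via the observation that $S_N^{(\ell)}$ is independent of $N - S_N^{(\ell)} = \sum_{\ell' \ne \ell} S_N^{(\ell')}$ for each $\ell$. Your caveat that part (b) requires the Bernoulli parameter to range over all of $[0,1]$ (which is what the proof of Theorem~\ref{thm:conv_classical}(b) actually delivers, even though its statement says $(0,1)$) is a legitimate point of care that the paper's one-line proof leaves implicit.
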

\begin{proof}
    The proof is a straightforward consequence of Theorem~\ref{thm:conv_classical} and the fact that $S_N^{(\ell)}$ is independent of $N - S_N^{(\ell)} = \sum_{ \ell' \ne \ell} S_N^{(\ell')}$.
\end{proof}
The free analogue of such a categorical variable is a tuple of elements $(\b_1, \ldots, \b_k)$ such that 
\begin{enumerate}
	\item[(i)] $\sum_{\ell = 1}^k \b_{\ell} = \unit$, 
	\item[(ii)] $\state(\b_{\ell}^n) = \state(\b_{\ell}) \in [0, 1]$ for any $1 \le \ell \le k$, $n \ge 1$, and 
	\item[(iii)] $\state(\b_{i_1}^{j_1}\cdots\b_{i_m}^{j_m}) = 0$ for any $i_\ell \in \{1, \ldots, k\}$, $j_\ell \ge 1$, $1 \le \ell \le m$, $m \ge 2$ such that $i_1 \ne i_2 \ne \cdots \ne i_m$.
\end{enumerate}
Letting $p_\ell = \state(\b_\ell)$, we must have $\sum_{\ell = 1}^k p_\ell = 1$. We call such a tuple of elements a \textit{free categorical} tuple with parameter $(p_1, \ldots, p_k)$. 

A canonical example of such a tuple is $k$ mutually orthogonal projections in a $*$-probability space adding up to the identity.

We have the following extension of Proposition~\ref{prop:thinning_free}. The proof is similar and hence omitted.
\begin{proposition}
	Let $\p$ be \fpois{} with rate $\lambda$ and $(\b_1, \ldots, \b_k)$ be a free categorical tuple with parameter $(p_1, \ldots, p_k) \in (0, 1)^k$. Also suppose that $\p$ and $\{\b_1, \ldots, \b_k\}$ are freely independent.
	\begin{enumerate}
        \item[(a)] If $\lambda = 1$, then $\p\b_1, \ldots, \p\b_k$ are free with $\p\b_\ell$ being a \fpois{} with rate $p_\ell$.
        \item[(b)] If $\lambda \ne 1$, then neither are $\p\b_\ell$ \fpois{} nor are they freely independent.
	\end{enumerate}
\end{proposition}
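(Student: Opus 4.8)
The plan is to mirror the proof of Proposition~\ref{prop:thinning_free}, with the product-cumulant formula \eqref{eq:cumulant_two} doing all the work. For part (a) I would first normalise so that the jump size $\alpha$ of $\p$ is $1$ (rescaling $\p$ by $\alpha$ only rescales the conclusion). With $\lambda = 1$ and $\alpha = 1$ one has $\kappa_m(\p) = 1$ for every $m$, hence $\kappa_\pi[\p, \ldots, \p] = 1$ for every $\pi \in NC(n)$; substituting this into \eqref{eq:cumulant_two} and using that $K$ is a bijection of $NC(n)$ gives
\[
\kappa_n(\p\b_\ell) = \sum_{\pi \in NC(n)} \kappa_{K(\pi)}[\b_\ell, \ldots, \b_\ell] = \sum_{\sigma \in NC(n)} \kappa_\sigma[\b_\ell, \ldots, \b_\ell] = \state(\b_\ell^n) = p_\ell
\]
for all $n \ge 1$, by condition (ii) in the definition of a free categorical tuple. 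Thus $\p\b_\ell$ is free Poisson with rate $p_\ell$ (and jump size $\alpha$ after undoing the normalisation).

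To get freeness of $\p\b_1, \ldots, \p\b_k$ it then suffices, exactly as in the proof of Proposition~\ref{prop:thinning_free}(a) and by Proposition~\ref{prop:freeness_equivalence}, to check that $\kappa_m(\p\b_{i_1}, \ldots, \p\b_{i_m})$ vanishes whenever $m \ge 2$ and the indices $i_1, \ldots, i_m$ are not all equal. The same computation (again $\kappa_\pi[\p, \ldots, \p] = 1$ for all $\pi$) yields
\[
\kappa_m(\p\b_{i_1}, \ldots, \p\b_{i_m}) = \sum_{\pi \in NC(m)} \kappa_{K(\pi)}[\b_{i_1}, \ldots, \b_{i_m}] = \state(\b_{i_1}\cdots\b_{i_m}),
\]
and the only step requiring a little care is to see that this is $0$: grouping maximal runs of consecutive equal indices rewrites $\b_{i_1}\cdots\b_{i_m}$ as $\b_{\ell_1}^{s_1}\cdots\b_{\ell_r}^{s_r}$ with $\ell_1 \ne \ell_2 \ne \cdots \ne \ell_r$, and since the $i$'s are not all equal we have $r \ge 2$, so condition (iii) forces $\state(\b_{i_1}\cdots\b_{i_m}) = 0$. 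This proves (a), and this word-reduction step is really the only ingredient not already present in the proof of Proposition~\ref{prop:thinning_free}.

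For part (b) the multi-component structure is hardly needed. Fixing any $\ell$, the pair $(\p, \b_\ell)$ satisfies the hypotheses of Proposition~\ref{prop:thinning_free} --- $\b_\ell$ is free Bernoulli with parameter $p_\ell \in (0, 1)$ by condition (ii), and $\p$ is free Poisson of rate $\lambda \ne 1$ freely independent of $\b_\ell$ --- so part (b) of that proposition gives that $\p\b_\ell$ is not free Poisson. To rule out joint freeness of $\p\b_1, \ldots, \p\b_k$, I would argue by contradiction: if they were free, then by associativity of free independence $\p\b_1$ would be free from the unital subalgebra generated by $\{\p\b_2, \ldots, \p\b_k\}$, which contains $\p(\unit - \b_1) = \p\b_2 + \cdots + \p\b_k$ by condition (i); this would make $\p\b_1$ and $\p(\unit - \b_1)$ free, contradicting Proposition~\ref{prop:thinning_free}(b). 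Hence the $\p\b_\ell$'s are not freely independent, which completes (b). (Alternatively, one can compute $\sum_\ell \kappa_2(\p\b_\ell)$ directly and check that it differs from $\kappa_2(\p) = \kappa_2(\sum_\ell \p\b_\ell)$ when $\lambda \ne 1$ and the $p_\ell$'s lie in $(0,1)$, which is again the same phenomenon as in Proposition~\ref{prop:thinning_free}(b).)
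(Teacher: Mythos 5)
Your proof is correct and follows essentially the route the paper intends: since the paper omits this proof as ``similar'' to Proposition~\ref{prop:thinning_free}, your argument is exactly that cumulant computation via \eqref{eq:cumulant_two}, with the run-grouping step feeding condition (iii) for part (a), and part (b) handled by reducing to the two-component case (either via $\p(\unit-\b_1)=\sum_{\ell\ge 2}\p\b_\ell$ and associativity of freeness, or by the direct $\kappa_2$ comparison). No gaps.
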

In the converse direction, we have the following result which follows from Proposition~\ref{prop:craig_free} and Theorem~\ref{thm:conv_free} on noting that if $\p\b_1, \ldots, \p\b_k$ are free and $\sum_{\ell = 1}^k \b_\ell = \unit$, then $\p\b_\ell$ and $\p(\unit - \b_\ell)$ are free for any $1 \le \ell \le k$.
\begin{theorem}
	Suppose that $\p$ and $\{\b_1, \ldots, \b_k\}$ are freely independent. Assume that $\sum_{\ell = 1}^k \b_\ell = \unit$. Assume that for each $1 \le \ell \le k$, the moments of $\b_\ell$ match the moments of some probability measure $\nu_\ell$ on $\R$. Suppose that $\p\b_1, \ldots, \p\b_k$ are free.
	\begin{enumerate}
		\item[(a)] If $(\b_1, \ldots, \b_k)$ is free categorical with parameter $(p_1, \ldots, p_k)$ such that $p_i \in (0, 1)$ for some $1 \le i \le k$ and $\state(\p) > 0$, then $\p$ must be \fpois{} with rate $1$ and jump size $\state(\p)$.
		\item[(b)] If $\p$ is a \fpois{} variable with rate $1$, then $(\b_1, \ldots, \b_k)$ must be a free categorical tuple. Moreover, if $(\cA, \state)$ is a $*$-probability space with a faithful state $\state$ and $\b_\ell$ is self-adjoint for any $1 \le \ell \le k$, then the $\b_\ell$'s must be a family of $k$ mutually orthogonal projections.
	\end{enumerate}
\end{theorem}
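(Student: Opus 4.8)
The plan is to reduce everything to results already proved in the excerpt, exploiting the hypothesis $\sum_{\ell=1}^k \b_\ell = \unit$. First I would note the observation stated just before the theorem: if $\p\b_1, \ldots, \p\b_k$ are free and $\sum_{\ell}\b_\ell = \unit$, then for each fixed $\ell$ the pair $\p\b_\ell$ and $\p\sum_{\ell'\ne\ell}\b_{\ell'} = \p(\unit - \b_\ell)$ is also free, since $\p(\unit-\b_\ell)$ lies in the algebra generated by $\{\p\b_{\ell'} : \ell'\ne\ell\}$ (freeness of a family passes to the subalgebras generated by disjoint subfamilies, and here all of $\p\b_{\ell'}$, $\ell'\ne\ell$, sit in the ``complementary'' algebra). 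Moreover $\p$ and $\b_\ell$ are freely independent because $\p$ is free from $\{\b_1,\ldots,\b_k\}$, hence from the subalgebra generated by $\b_\ell$.

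For part (a): pick the index $i$ with $p_i = \state(\b_i) \in (0,1)$. Since $(\b_1,\ldots,\b_k)$ is free categorical, property (ii) gives $\state(\b_i^n) = \state(\b_i) = p_i$ for all $n$, so $\b_i$ is a free Bernoulli variable with parameter $p_i \in (0,1)$ in the sense of the excerpt. Now apply part (a) of Theorem~\ref{thm:conv_free} to the free pair $(\p, \b_i)$, using that $\p\b_i$ and $\p(\unit-\b_i)$ are free and $\state(\p) > 0$: this immediately yields that $\p$ is free Poisson with rate $1$ and jump size $\state(\p)$.

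For part (b): assume $\p$ is free Poisson with rate $1$. For each $\ell$, the pair $(\p,\b_\ell)$ is free, $\p\b_\ell$ and $\p(\unit-\b_\ell)$ are free, so part (b) of Theorem~\ref{thm:conv_free} applies: $\state(\b_\ell^n) = \state(\b_\ell)$ for all $n$, and since the moments of $\b_\ell$ match those of a probability measure $\nu_\ell$ on $\R$, $\b_\ell$ is free Bernoulli with parameter $p_\ell = \nu_\ell(\{1\}) \in [0,1]$. This establishes properties (i) and (ii) of a free categorical tuple, and $\sum_\ell p_\ell = \state(\sum_\ell \b_\ell) = \state(\unit) = 1$. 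For property (iii), I would invoke Proposition~\ref{prop:craig_free} (the free Craig's theorem): the freeness of $\p\b_1, \ldots, \p\b_k$ with $\p$ free Poisson of rate $1$ and free from $\{\b_1,\ldots,\b_k\}$ forces $\state(\b_{i_1}\cdots\b_{i_m}) = 0$ whenever the $i_\ell$ are not all equal; replacing each $\b_{i_\ell}$ by $\b_{i_\ell}^{j_\ell}$ is handled by noting that $\b_\ell^{j} $ has the same mixed moments as $\b_\ell$ (an easy consequence of $\state(\b_\ell^n) = \state(\b_\ell)$, or more directly one applies Proposition~\ref{prop:craig_free} with the $\a$'s taken to be the relevant powers, since freeness of $\p\b_\ell$ gives freeness of $\p\b_\ell^j$ as these generate the same algebras). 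This gives property (iii), so $(\b_1,\ldots,\b_k)$ is free categorical.

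For the final claim, suppose $(\cA,\state)$ is a $*$-probability space with faithful $\state$ and some $\b_\ell$ is self-adjoint. The last part of Theorem~\ref{thm:conv_free}(b) already gives that each self-adjoint $\b_\ell$ is a projection. It remains to show mutual orthogonality: for $r \ne s$, property (iii) gives $\state(\b_s \b_r^2 \b_s) = \state(\b_r \b_s \b_r) = 0$ (using $\state(\b_r\b_s) = 0$ type relations and self-adjointness), i.e. $\state((\b_r\b_s)^*(\b_r\b_s)) = \state(\b_s\b_r^2\b_s) = \state(\b_s\b_r\b_s) = 0$ since $\b_r$ is a projection, whence faithfulness forces $\b_r\b_s = \zero$. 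I expect the only mildly delicate point to be the bookkeeping for property (iii) with arbitrary powers $j_\ell$ and the careful justification that freeness of the family $\{\p\b_\ell\}$ really does pass to the pairs $(\p\b_\ell, \p(\unit-\b_\ell))$; everything else is a direct citation of Theorem~\ref{thm:conv_free} and Proposition~\ref{prop:craig_free}.
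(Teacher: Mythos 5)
Your proposal is correct and is essentially the paper's own argument: the paper proves this theorem exactly by the one-line reduction you give (freeness of the family passes to each pair $\p\b_\ell$, $\p(\unit-\b_\ell)=\sum_{\ell'\ne\ell}\p\b_{\ell'}$, and $\p$ is free from each $\b_\ell$), followed by citations of Theorem~\ref{thm:conv_free} and Proposition~\ref{prop:craig_free}. One remark on the point you flag as delicate: it is not delicate at all, since $\b_{i_1}^{j_1}\cdots\b_{i_m}^{j_m}$ with $i_1\ne i_2\ne\cdots\ne i_m$ and $m\ge 2$ is itself a word $\b_{a_1}\cdots\b_{a_M}$ whose indices are not all equal, so Proposition~\ref{prop:craig_free} applied with $\a_\ell=\b_\ell$ gives its vanishing directly; you should drop the alternative justification that ``freeness of $\p\b_\ell$ gives freeness of $\p\b_\ell^j$ as these generate the same algebras,'' which is false in general ($\p\b_\ell^j$ need not lie in the algebra generated by $\p\b_\ell$), as is the unproved claim that $\b_\ell^j$ has the same mixed moments as $\b_\ell$.
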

Clearly, using the results of this section, we can formulate versions of Propositions~\ref{prop:asymptotic_cochran_forward_gen} and \ref{prop:asymptotic_cochran_backward_gen} for more than two quadratic functions of random matrices.

\section{Other variants of \fpois{} thinning}\label{sec:other_versions}
\subsection{An asymptotic variant of \fpois{} thinning}
Recall that a classical Poisson random variable arises as the limit of a sum of independent sparse Bernoulli variables: If $(X_i)_{i = 1}^n$ are independent Bernoulli random variables with parameter $\frac{\lambda}{n}, \lambda > 0$, then the binomial variable $N_n = \sum_{i = 1}^n X_i$ converges in distribution to a Poisson$(\lambda)$ variable as $n \rightarrow \infty$. In fact, one can construct the classical thinning procedure from this perspective as follows. Let $\{X, Y\}$ be independent, where $X$ is a Bernoulli random variable with mean $\frac{\lambda}{n}$. Also, let $\{X_1, Y_1\}, \ldots, \{X_n, Y_n\}$ be independent copies of $\{X, Y\}$. Define
\[
    S_n = \sum_{i = 1}^n X_iY_i.
\]
Then the following result can be proved using arguments similar to the proofs of Theorem~\ref{thm:conv_classical} and Proposition~\ref{prop:conv_classical}.
\begin{theorem}
The following are equivalent.
\begin{enumerate}\label{thm:classical_poi_v2}
    \item[(i)] $S_n$ is asymptotically Poisson.
    \item[(ii)] $Y$ is a Bernoulli random variable.
    \item[(iii)] $S_n$ and $N_n - S_n$ are asymptotically independent.
\end{enumerate}
\end{theorem}

Similar to the classical case, a \fpois{} random variable arises as the limit of a sum of freely independent sparse Bernoulli variables. To elaborate, let $(\a_i)_{i = 1}^n$ be freely independent Bernoulli variables with parameter $\frac{\lambda}{n}$. Then as $n \rightarrow \infty$, the sum $\N_n = \sum_{i = 1}^n \a_i$ converges in distribution to a \fpois{} random variable with rate $\lambda$ (and jump size $1$). Motivated by the construction described in the previous paragraph, we can describe an asymptotic variant of \fpois{} thinning as follows. Let $\{\a, \b\}$ be classically independent Bernoulli variables with parameters $\frac{\lambda}{n}$ and $p$, respectively. Also, let $\{\a_1, \b_1\}, \ldots, \{\a_n, \b_n\}$ be free copies of $\{\a, \b\}$. Define
\[
    \S_n = \sum_{i = 1}^n \a_i\b_i.
\]
This, as we will soon see, converges as $n \rightarrow \infty$ to a \fpois{} variable with rate $\lambda p$. Moreover, $\S_n$ and $\N_n - \S_n$ are asymptotically free. We will also prove a characterization result about this procedure. First we compute an expression for the mixed free cumulants of $\S_n$ and $\N_n - S_n$ for an arbitrary $\b$. 
\begin{proposition}\label{prop:thinning_free_v2_mixed_cum}
    Let $\{\a, \b\}$ be classically independent, where $\a$ is a Bernoulli variable with parameter $\frac{\lambda}{n}$. Also, let $\{\a_1, \b_1\}, \ldots, \{\a_n, \b_n\}$ be free copies of $\{\a, \b\}$. Let $\S_n^{(\epsilon)} := \sum_{i = 1}^n \a_i \b_i^{(\epsilon)}$, where $\epsilon \in \{0, 1\}$ and $\b_{i}^{(1)} = \b_i$, $\b_i^{(0)} = \unit - \b_i$. In this notation, $\S_n^{(1)} = \S_n$ and $\S_n^{(0)} = \N_n - \S_n$. Then for any $m \ge 1$ and $\epsilon_1, \ldots, \epsilon_m \in \{0, 1\}$,
    \begin{equation}\label{eq:thinning_free_v2_mixed_cum}
        \kappa_m(\S_n^{(\epsilon_1)}, \ldots, \S_n^{(\epsilon_m)}) = \lambda \state(\b^{\sum_{i = 1}^m \epsilon_i}(\unit - \b)^{m - \sum_{i = 1}^m \epsilon_i}) + O\bigg(\frac{1}{n}\bigg).
    \end{equation}
\end{proposition}

\begin{proof}
Note that
\begin{align*}
    \kappa_m(\S_n^{(\epsilon_1)}, \ldots, \S_n^{(\epsilon_m)}) &= \kappa_m\bigg(\sum_{i = 1}^n \a_i \b_i^{(\epsilon_1)}, \ldots, \sum_{i = 1}^n \a_i \b_i^{(\epsilon_m)}\bigg) \\
                                                             &= \sum_{i_1, \ldots, i_m} \kappa_m(\a_{i_1}\b_{i_1}^{(\epsilon_1)}, \ldots, \a_{i_m}\b_{i_m}^{(\epsilon_m)}) \\
                                                             &= \sum_{i = 1}^n \kappa_m(\a_i\b_i^{(\epsilon_1)}, \ldots, \a_i\b_i^{(\epsilon_m)}) \\
                                                             &= n \kappa_m(\a\b^{(\epsilon_1)}, \ldots, \a\b^{(\epsilon_m)}),
\end{align*}
where the penultimate equality follows from freeness since all mixed free cumulants must vanish, and the last equality follows from the fact that $\{\a_i, \b_i\}$ are identically distributed as $\{\a, \b\}$. Now
\begin{align*}
    \kappa_m(\a\b^{(\epsilon_1)}, \ldots, \a\b^{(\epsilon_m)}) &= \sum_{\pi \in NC(m)} \moeb(\pi, \bone_m) \state_\pi(\a\b^{(\epsilon_1)}, \ldots, \a\b^{(\epsilon_m)}) \\ 
                                                                       &= \sum_{\pi \in NC(m)} \moeb(\pi, \bone_m) \prod_{V \in \pi} \state(\a^{|V|}) \state(\prod_{\ell \in V}\b^{(\epsilon_{\ell})}) \\
                                                                       &= \sum_{\pi \in NC(m)} \moeb(\pi, \bone_m) \bigg(\frac{\lambda}{n}\bigg)^{|\pi|} \prod_{V \in \pi} \state(\prod_{\ell \in V}\b^{(\epsilon_{\ell})}) \\
                                                                       &= \frac{\lambda}{n} \state(\b^{\sum_{i = 1}^m \epsilon_i}(\unit - \b)^{m - \sum_{i = 1}^m \epsilon_i}) + O\bigg(\frac{1}{n^2}\bigg),
\end{align*}
whence the desired result follows.
\end{proof}
We can now prove the following characterization result.
\begin{theorem}\label{thm:free_poi_v2}
    Let $\{\a, \b\}$ be classically independent, where $\a$ is a Bernoulli variable with parameter $\frac{\lambda}{n}$. Also, let $\{\a_1, \b_1\}, \ldots, \{\a_n, \b_n\}$ be free copies of $\{\a, \b\}$. Define $\N_n$ and $\S_n$ as before. Then we have the following.
\begin{enumerate}
    \item[(i)] $\S_n$ is asymptotically \fpois{} with rate $\mu > 0$ and jump size $\alpha > 0$ if and only if $\state((\alpha^{-1}\b)^m) = \frac{\mu}{\lambda}$ for all $m \ge 1$.
    \item[(ii)] $\S_n$ and $\N_n - \S_n$ are asymptotically free if and only if $\state(\b^m) = \state(\b)$ for all $m \ge 1$.
\end{enumerate}
\end{theorem}
\begin{proof}
    (i) From \eqref{eq:thinning_free_v2_mixed_cum} we see that $\kappa_m(\S_n) = \kappa_m(\S_n^{(1)}, \ldots, \S_n^{(1)}) = \lambda\state(\b^m) + O(n^{-1})$ for any $m \ge 1$. Thus $\S_n$ is asymptotically \fpois{} with rate $\lambda$ and jump size $\alpha$ if and only if for all $m \ge 1$,
\[
    \lambda\state(\b^m) = \mu \alpha^m,
\]
i.e. $\state((\alpha^{-1}\b)^m) = \frac{\mu}{\lambda}$.

(ii) Note that by \eqref{eq:thinning_free_v2_mixed_cum}, all mixed free cumulants of $\S_n$ and $\N_n - \S_n$ vanish asymptotically if and only if $\state(\b^{m_1}(\unit - \b)^{m_2}) = 0$ for any $m_1, m_2 > 0$. This is equivalent to the requirement that $\state(\b^m) = \state(\b)$ for all $m \ge 1$.
\end{proof}
\begin{remark}
Note that the variant of \fpois{} thinning considered in this subsection does not require $\lambda$ to be $1$. 
\end{remark}
\subsection{A \fpois{} random measure approach}
One of the anonymous referees proposed a version of \fpois{} thinning based on the notion of \fpois{} random measures \cite{barndorff2006classical}. In this subsection, we will briefly describe the suggested construction without formally defining \fpois{} random measures. The starting point is the representation of a Poisson random variable as
\[
    N = \int_{[0, \lambda]} m(dt),
\]
where $m$ is a Poisson random measure on $([0, \infty), \mathcal{B}_{[0, \infty)}, \leb)$ and $\leb$ denotes the Lebesgue measure. Let $X_i \overset{\iid}{\sim} \nu$. Then we can represent $N$ as
\[
    N = \int_{[0, \lambda] \times \R} M(dt, dx),
\]
where $M$ is a Poisson random measure on $([0, \infty) \times \R, \mathcal{B}_{[0, \infty) \times \R}, \leb \otimes \nu)$. With this we can represent the sum $S_N$ as
\[
    S_N = \int_{[0, \lambda] \times \R} x M(dt, dx).
\]
By Theorem~\ref{thm:conv_classical} and Proposition~\ref{prop:conv_classical}, we then have the equivalence of the following statements.
\begin{enumerate}
    \item[(i)] $S_N$ is Poisson.
    \item[(ii)] $\nu$ is a Bernoulli distribution.
    \item[(iii)] $S_N$ and $N - S_N$ are independent.
\end{enumerate}

Barndorff-Nielsen and Thorbj{\o}rnsen \cite{barndorff2006classical} defined \fpois{} random measures on measure spaces and a theory of integration with respect to them. With the notion of \fpois{} random measures, one can define an exact analogue of Poisson thinning in the free world. Let $\M$ be a \fpois{} random measure on $([0, \infty) \times \R, \mathcal{B}_{[0, \infty) \times \R}, \leb \otimes \nu)$. Then a \fpois{} variable with rate $\lambda$ may be represented as
\[
    \p = \int_{[0, \lambda] \times \R} \M(dt, dx).
\]
Define
\[
    \S_\p := \int_{[0, \lambda] \times \R} x\, \M(dt, dx),
\]
which can be thought of as the analogue of $S_N$ in the free world. It then seems plausible that the following statements are equivalent.
\begin{enumerate}
    \item[(i)] $\S_\p$ is a \fpois{} variable.
    \item[(ii)] $\nu$ is a Bernoulli distribution.
    \item[(iii)] $\S_\p$ and $\p - \S_\p$ are freely independent.
\end{enumerate}

\section*{Acknowledgments}
The author thanks the anonymous referees for their helpful comments and suggestions which significantly improved the paper.

\bibliographystyle{abbrv}
\bibliography{poi-bern-arxiv}
\end{document}